 \newtheorem{thm}{Theorem}[section]
 \newtheorem{cor}[thm]{Corollary}
 \newtheorem{lem}[thm]{Lemma}
 \newtheorem{prop}[thm]{Proposition}
 \theoremstyle{definition}
 \newtheorem{defn}[thm]{Definition}
 \theoremstyle{remark}
 \newtheorem{rem}[thm]{Remark}
 \newtheorem{ex}[thm]{Example}
 \numberwithin{equation}{section}
 \newcommand{\Rat}{\operatorname{Rat}}
 \newcommand{\PGL}{\operatorname{PGL}}
 \newcommand{\chr}{\operatorname{char}}
 \newcommand{\Aut}{\operatorname{Aut}}
 \newcommand{\Poly}{\operatorname{Poly}}
\newlength{\defbaselineskip}
\begin{document}

\title{The McMullen Map In Positive Characteristic}

\author{Alon Levy}

\maketitle

\begin{abstract}
McMullen proved the moduli space of complex rational maps can be parametrized by the spectrum of all periodic-point multipliers up to a finite amount of data, with the well-understood exception of Latt\`{e}s maps. We generalize his method to large positive characteristic. McMullen's method is analytic; a modified version of the method using rigid analysis works over a function field over a finite field of characteristic larger than the degree of the map. Over a finite field with such characteristic it implies that, generically, rational maps can indeed be parametrized by their multiplier spectra up to a finite-to-one map. Moreover, the set of exceptions, that is positive-dimension varieties in moduli space with identical multipliers, maps to just a finite set of multiplier spectra. We also prove an application, generalizing a result of McMullen over the complex numbers: there is no generally convergent purely iterative root-finding algorithm over a non-archimedean field whose residue characteristic is larger than either the degree of the algorithm or the degree of the polynomial whose roots the algorithm finds.
\end{abstract}

\section{Introduction}

A rational function over a field $F$ can be studied by associating invariants to its fixed and periodic points. At a finite fixed point $z$ of a rational function $\varphi$, the
derivative $\varphi'(z)$ can be interpreted as the map on tangent spaces, and is conjugation-invariant: that is, if $A$ is any linear-fractional transformation,
then $\varphi'(z) = (A\varphi A^{-1})'(z)$. We call this conjugation-invariant derivative the \textbf{multiplier} of $\varphi$ at $z$. If $z$ is the point at
infinity, we can extend the definition of the multiplier by conjugating it to be any finite point. We can furthermore define the multiplier of a point of exact
period $n$ to be $(\varphi^{n})'(z)$; here and in the sequel, powers of maps denote iteration rather than multiplication. The set of multipliers of period $n$,
called the \textbf{multiplier spectrum}, depends only on the map $\varphi$, and because it is conjugation-invariant it really only depends on its conjugacy class.

Thus the symmetric functions in the multipliers are regular algebraic functions on the space of rational maps of degree $d$ modulo conjugation, $\mathrm{M}_{d}$.
Collecting the symmetric functions in the period-$n$ multipliers for each $n$, we obtain regular algebraic maps $\Lambda_{n}$ from $\mathrm{M}_{d}$ to large affine
spaces $\mathbb{A}^{K_{n}}$ where $K_{n}$ is the number of period-$n$ points. Because there are infinitely such maps, one for each $n$, a priori we would expect
there to be some large $n$ for which the map
$$\Lambda_{1} \times \ldots \times \Lambda_{n}: \mathrm{M}_{d} \to \mathbb{A}^{K_{1} + \ldots + K_{n}}$$ is injective. This would allow us to analyze the geometry
of $\mathrm{M}_{d}$ purely in terms of multipliers. The map is not injective, but we do have a partial result:

\begin{thm}\label{intromain}Over a field of characteristic $0$ or greater than $d$, for sufficiently large $n$ the map $$\Lambda_{1} \times \ldots \times
\Lambda_{n}: \mathrm{M}_{d} \to \mathbb{A}^{K_{1} + \ldots + K_{n}}$$ is finite-to-one outside a proper closed subset of $\mathrm{M}_{d}$.\end{thm}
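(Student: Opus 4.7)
The plan is to adapt McMullen's complex-analytic proof to positive characteristic by replacing quasiconformal deformations with rigid analytic ones. For each $n$, write $F_n := \Lambda_1 \times \cdots \times \Lambda_n$. The dimensions of the generic fibers of the $F_n$ form a non-increasing sequence of non-negative integers, and therefore stabilize at some value $d_\infty \geq 0$. For a dominant morphism of varieties with $0$-dimensional generic fiber there is a proper closed subset of the source outside which the morphism is quasi-finite; after replacing the codomain of $F_n$ with its image closure, the theorem reduces to the claim that $d_\infty = 0$.

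Suppose for contradiction that $d_\infty \geq 1$. Then for all large $n$ there is a non-constant morphism from a smooth curve $C$ to $\mathrm{M}_d$ whose image lies in a single fiber of $F_n$. Passing to the function field $K$ of $C$, this gives a single rational map $\varphi$ of degree $d$ over $K$ whose multiplier spectra of every period are constant along the family. Choose a place of $K$, let $\widehat{K}$ denote the completion of the algebraic closure at that place, and view $\varphi$ over $\widehat{K}$ as a rigid analytic one-parameter deformation of a specialization $\varphi_0$, all of whose multipliers are preserved by the deformation.

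The rigid analytic core of the proof is then to show that any such nontrivial multiplier-preserving deformation produces a $\varphi_0$-invariant line-field-type structure on the Berkovich Julia set of $\varphi_0$, and that the existence of such an invariant forces $\varphi_0$ to be a Latt\`{e}s map arising from an isogeny of an elliptic curve. This is the direct transcription of McMullen's invariant line field argument, with rigid analytic deformation theory playing the role of the Measurable Riemann Mapping Theorem. The hypothesis $\chr = 0$ or $\chr > d$ enters here in an essential way: it guarantees that periodic points of each period are separable over the base, that iteration interacts cleanly with differentiation so the multiplier computation behaves correctly, and that no wild ramification produces exotic flexible families outside the Latt\`{e}s locus. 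The hard part will be building the non-archimedean deformation machinery --- in particular, proving the infinitesimal deformation is both integrable and $\varphi_0$-equivariant in the rigid analytic sense --- after which the conclusion follows: the Latt\`{e}s locus is a proper closed subvariety of $\mathrm{M}_d$, so $C$ must lie entirely inside it, contradicting the assumption that the image of $C$ was an arbitrary positive-dimensional fiber of $F_n$.
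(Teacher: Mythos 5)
Your reduction step (stabilization of generic fiber dimension, extraction of a curve in a fiber, passage to the function field to get an isospectral one-parameter family over a non-archimedean completion) is essentially the same as the paper's proof of Corollary 1.11, though you should be careful on two points: a curve lying in a fiber of $F_n$ only gives constancy of multipliers of period $\leq n$, so you need the stabilization to produce a compatible system of curves (as the paper does with the $Y_i$ and their common preimage $Z$) before you can claim constancy of \emph{every} multiplier spectrum; and you must choose the place of $K$ so that some multiplier is \emph{repelling} (possible because the multipliers are non-constant functions on the base curve, hence not in $\overline{\mathbb{F}_p}$), since at an arbitrary place the map may have potentially good reduction, its Berkovich Julia set can be a single point, and there is nothing for the argument to act on.

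The genuine gap is the core of your argument, which you explicitly defer: ``producing a $\varphi_0$-invariant line-field-type structure on the Berkovich Julia set'' and concluding Latt\`{e}s is not a transcription of any existing machinery. There is no non-archimedean Measurable Riemann Mapping Theorem, no quasiconformal deformation theory, and no Ma\~{n}\'{e}-Sad-Sullivan stability theory; more importantly, McMullen's actual argument does not run through invariant line fields but through holomorphic motions of repelling points, forcing the family to be postcritically finite, and then invoking Thurston rigidity --- and Thurston rigidity is precisely what is unavailable in positive characteristic, so even a successful ``PCF, hence rigid'' reduction would stall at your final step. The paper's substitute is quite different: B\'{e}zivin's theorem gives infinitely many repelling points once a repelling multiplier is arranged, Baldassarri's theorem gives local analytic labeling of them on the parameter curve, an ultrametric non-collision estimate ($|x(c)| \leq |z_i(0)| \to 0$) shows a critical point meeting a point preperiodic to a repelling cycle stays there, finiteness of separable maps from an affine curve to $\mathbb{P}^1$ minus three points forces all critical points to be persistently preperiodic, and then the arithmetic input replacing Thurston rigidity is the prior result that \emph{tame PCF maps over function fields have all multipliers in $\overline{\mathbb{F}_p}$}, contradicting the chosen repelling multiplier; Latt\`{e}s families are excluded automatically because their multipliers lie in $\overline{\mathbb{F}_p}$, not by a separate argument that the Latt\`{e}s locus is a proper closed subset avoided by a generic fiber. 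As written, your proposal leaves the entire analytic and rigidity content of the theorem as an unproven black box, and the particular black box you name is unlikely to exist in the non-archimedean setting.
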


Unfortunately, we only get generic finiteness. The following example provides a family of functions in $\mathrm{M}_{d}$ on which all periodic cycles have constant multipliers, which we call an
\textbf{isospectral family}:

\begin{defn}The map $\varphi: \mathbb{P}^{1} \to \mathbb{P}^{1}$ is called a \textbf{Latt\`{e}s map} if there is an elliptic curve $E$, a finite morphism $\alpha: E
\to E$, and a finite separable map $\pi$ such that the following diagram commutes:

\begin{equation*}
  \xymatrix@R+2em@C+2em{
  E \ar[r]^{\alpha} \ar[d]_{\pi} & E \ar[d]^{\pi} \\
  \mathbb{P}^{1} \ar[r]_{\varphi} & \mathbb{P}^{1}
  }
\end{equation*}

If we choose $\pi$ to be the projection by $P\sim -P$, then $\alpha$ must be of the form $\alpha_{m, T}: P \mapsto mP + T$ where $T \in E[2]$.\end{defn}

McMullen has proven the following result:

\begin{thm}\label{mcm}~\cite{McM} Excluding the Latt\`{e}s maps, there are no positive-dimension isospectral families over $\mathbb{C}$.\end{thm}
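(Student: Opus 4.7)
The plan is to argue by contradiction. Assume there is a positive-dimensional holomorphic family $\varphi_{\lambda}$, $\lambda \in \Delta$, of non-Latt\`{e}s rational maps in $\mathrm{M}_{d}$ with constant multiplier spectrum, and aim to extract an invariant measurable line field on the Julia set of some $\varphi_{0}$, contradicting the rigidity theorem that forces such maps to be Latt\`{e}s. First I would verify that the family is $J$-stable: because no periodic multiplier moves, no cycle crosses from repelling to indifferent and no merging of cycles occurs, so the Ma\~{n}\'{e}--Sad--Sullivan criterion applies. The $\lambda$-lemma together with S\l{}odkowski's extension theorem then produces a quasiconformal motion $h_{\lambda} : \mathbb{P}^{1} \to \mathbb{P}^{1}$ conjugating $\varphi_{0}$ to $\varphi_{\lambda}$ on $J(\varphi_{0})$.

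Differentiating $h_{\lambda}$ at $\lambda = 0$ yields a $\varphi_{0}$-invariant Beltrami differential $\mu$ on $\mathbb{P}^{1}$. Non-triviality of the family modulo $\PGL_{2}$-conjugation makes $\mu$ a nonzero class in the Teichm\"{u}ller space of quasiconformal deformations of $\varphi_{0}$. Splitting $\mu = \mu_{F} + \mu_{J}$ along the Fatou/Julia decomposition, the next step is to show $\mu_{J} \neq 0$. Fatou-side deformations parametrize QC deformations of the quotient Riemann surfaces attached to the periodic Fatou components; because multipliers at all attracting and parabolic cycles and rotation numbers at all Siegel disks are fixed along the family, the moduli of these annuli, disks, and rotation domains are pinned down. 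Any tangent direction of the family escaping both the M\"{o}bius orbit and this Fatou-side Teichm\"{u}ller space must therefore move $\mu_{J}$.

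A nonzero $\varphi_{0}$-invariant Beltrami differential supported on a positive-measure subset of $J(\varphi_{0})$ defines a measurable line field on $J(\varphi_{0})$ invariant under $\varphi_{0}$. The rigidity theorem, due to Thurston in the post-critically finite case and to McMullen in general, then forces $\varphi_{0}$ to be a Latt\`{e}s map, giving the desired contradiction. The main obstacle is the Fatou/Julia bookkeeping in the middle step: one must precisely identify the Teichm\"{u}ller space of the Fatou dynamics of $\varphi_{0}$ and verify that the isospectral hypothesis exhausts all of its moduli, so that a genuine moduli-theoretic deformation of the family is forced onto the Julia set rather than absorbed into Fatou-side variation. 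Secondary technical points are the initial reduction from an arbitrary positive-dimensional isospectral variety to a one-parameter holomorphic family, and the standard but delicate measurable-selection step that turns $\mu_{J}$ into an honest invariant line field.
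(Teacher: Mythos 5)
Your final step is where the argument breaks down: the assertion that a nonzero invariant measurable line field on the Julia set forces the map to be Latt\`{e}s is \emph{not} a theorem of McMullen ``in general'' --- it is the no-invariant-line-fields conjecture, which is open and is well known to be equivalent to the density of hyperbolic maps in $\Rat_{d}$ (Ma\~{n}\'{e}--Sad--Sullivan, McMullen). It is known only in special cases, the relevant one being the postcritically finite case, where it follows from Thurston's rigidity (Theorem~\ref{rigidity}). So as written your proof derives the theorem from an open conjecture. McMullen's actual route, sketched in the introduction of this paper, is designed precisely to avoid this: in an isospectral (more generally, stable) family the repelling periodic points and their preimages move holomorphically and without collision, and one shows a marked critical point cannot pass through a point preperiodic to a repelling cycle unless it sits there persistently; since repelling points are dense in the Julia set, every critical point of every map in the family is preperiodic, i.e.\ the family is PCF, and only then is Thurston's rigidity invoked --- in the one case where the line-field statement is available.

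There is also a genuine gap in your middle step. Fixing the multipliers at attracting and parabolic cycles and the rotation numbers of Siegel disks does \emph{not} pin down the Fatou-side Teichm\"{u}ller space: by McMullen--Sullivan, that space also records the positions of the critical orbits in the quotient tori of attracting basins (and analogous data in parabolic and rotation domains), moduli which are invisible to the multiplier spectrum. Hence your decomposition $\mu = \mu_{F} + \mu_{J}$ does not yield $\mu_{J} \neq 0$; a priori the deformation could be absorbed entirely into such Fatou-side critical-orbit motion. (That this cannot happen for a genuinely isospectral family is true, but it is a consequence of the theorem --- via the constancy of the \emph{repelling} multipliers --- not an input you can assume; your proposal never uses the repelling part of the spectrum beyond $J$-stability.) The $J$-stability reduction and the holomorphic-motion/$\lambda$-lemma setup in your first paragraph are fine and do match the standard framework, but the two later steps need to be replaced by the critical-orbit/preperiodicity argument above.
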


McMullen's result is stronger than Theorem~\ref{intromain} in characteristic zero. However, his proof uses inherently complex-analytic methods. Although it is true
over any characteristic-$0$ field by the Lefschetz principle, it does not give us any proof in positive characteristic. In fact, the result is false in
characteristic $p$, as shown in,

\begin{ex}\label{counter}Let $\varphi(z) = \psi(z^{p}) + az$ where $a$ is a constant and $\psi$ is a family of rational functions. The family is isospectral (all
period-$n$ multipliers at finite points are $a^{n}$). However, if $\psi$ is large enough, for example if $\dim\psi > 3$, then the family has no hope of reducing to
a point in $\mathrm{M}_{d}$ since $\mathrm{M}_{d} = \Rat_{d}/\PGL_{2}$ where $\Rat_{d}$ is the space of rational functions of degree $d$ and $\PGL_{2}$ acts by
coordinate change.\end{ex}

However, Example~\ref{counter} involves wild ramification. There are theoretical reasons to believe that a positive-characteristic version of Theorem~\ref{mcm} is
true for tamely ramified maps. Briefly, McMullen's theorem is intimately connected with Thurston's rigidity theorem. Specifically:

\begin{defn}A point $z$ is a \textbf{preperiodic point} of $\varphi$ if for some $n$, $\varphi^{n}(z)$ is a periodic point; we say the minimal such $n$ is the
\textbf{tail length}. Observe that $z$ is preperiodic if and only if it has finite forward orbit. If all of the critical points of $\varphi$ are preperiodic, we say
that $\varphi$ is \textbf{postcritically finite}, or in short PCF.\end{defn}

There are $2d-2$ critical points counted with multiplicity when $\varphi$ is tamely ramified, and so the condition that $\varphi$ is PCF, at least if we fix the
cycle and tail length of each critical point, is a set of $2d-2$ algebraic equations. Since $\dim\mathrm{M}_{d} = 2d - 2$, there should be finitely many PCF maps
for each degree, cycle length, and tail length. Over $\mathbb{C}$, Thurston has proven that this is more or less the case:

\begin{thm}\label{rigidity}\cite{BBLPP, DH} (Thurston's Rigidity) Excluding the Latt\`{e}s maps, all of which are PCF, there are no positive-dimension PCF
families.\end{thm}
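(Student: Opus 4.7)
The plan is to use Thurston's pullback map on a Teichm\"{u}ller space, in the framework of Douady and Hubbard. Given a PCF map $\varphi$ of degree $d$ over $\mathbb{C}$, let $P \subset \mathbb{P}^{1}$ be its postcritical set, which is finite by assumption. Consider the Teichm\"{u}ller space $\mathrm{Teich}(\mathbb{P}^{1}, P)$ of complex structures on the sphere marked at $P$, and define the pullback map $\sigma_{\varphi}: \mathrm{Teich}(\mathbb{P}^{1}, P) \to \mathrm{Teich}(\mathbb{P}^{1}, P)$ by $[\tau] \mapsto [\varphi^{*}\tau]$; this is well-defined because $\varphi^{-1}(P) \supseteq P$ together with the critical set. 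Fixed points of $\sigma_{\varphi}$ correspond, modulo M\"{o}bius equivalence, to rational realizations of the combinatorial data $(\varphi, P)$, so controlling the fixed-point set of $\sigma_{\varphi}$ controls the moduli of PCF realizations with fixed combinatorics.

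The key analytic input is that $\sigma_{\varphi}$ is holomorphic and weakly contracts the Teichm\"{u}ller metric, with strict contraction on some iterate provided no invariant multicurve forms a \emph{Thurston obstruction}. Now stratify $\mathrm{M}_{d}$ by the combinatorial type of the PCF data, namely the partition of critical points by their cycle and tail lengths. For each degree $d$ and each bound $B$ on cycle plus tail length, there are only finitely many such combinatorial types. A hypothetical positive-dimensional PCF family would, after passing to an irreducible component and stratifying, produce a positive-dimensional connected family all sharing one combinatorial type, and hence a positive-dimensional connected component of $\mathrm{Fix}(\sigma_{\varphi})$, which is incompatible with strict contraction of some iterate.

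It remains to show that the only way strict contraction can fail in this setting is the Latt\`{e}s case. Using Royden's identification of the Teichm\"{u}ller and Kobayashi metrics and the description of the coderivative of $\sigma_{\varphi}$ in terms of pushforwards of holomorphic quadratic differentials on $(\mathbb{P}^{1}, P)$, a positive-dimensional isometric fixed locus forces the associated ramification orbifold of $\varphi$ to have vanishing Euler characteristic, i.e.\ Euclidean signature, namely one of $(2,2,2,2)$, $(2,4,4)$, $(2,3,6)$, or $(3,3,3)$. Each such orbifold is the quotient of an elliptic curve by a finite group of automorphisms, and lifting $\varphi$ through the orbifold covering produces precisely a finite self-morphism $\alpha$ of an elliptic curve with $\pi \circ \alpha = \varphi \circ \pi$, recognizing $\varphi$ as a Latt\`{e}s map in the sense of Definition~1.2.

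The main obstacle is the analytic input: proving that $\sigma_{\varphi}$ is weakly contracting and sharpening this to strict contraction outside the Euclidean-orbifold locus. This requires Royden's theorem, a careful infinitesimal analysis of the pullback operator on quadratic differentials, and the Thurston combinatorial obstruction theory that identifies when fixed points of $\sigma_{\varphi}$ exist at all; once these are in hand, the rigidity conclusion is a relatively short consequence of strict contraction plus the orbifold classification.
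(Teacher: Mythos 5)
The paper does not prove this statement at all: Theorem~\ref{rigidity} is quoted as an external input, with the proof delegated to the cited references (Douady--Hubbard and the orbifold/Latt\`{e}s analysis), and the paper only uses its conclusion. Your sketch is essentially the argument of those references: the Thurston pullback map $\sigma_{\varphi}$ on $\mathrm{Teich}(\mathbb{P}^{1},P)$, fixed points as rational realizations, weak contraction of the Teichm\"{u}ller metric, the coderivative identified with the pushforward on quadratic differentials, and the classification of the equality case by Euclidean orbifold signatures $(2,2,2,2)$, $(2,4,4)$, $(2,3,6)$, $(3,3,3)$, which is exactly where the flexible Latt\`{e}s families live. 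So you are reproducing the standard proof rather than offering a new route.

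Two points deserve attention. First, the step from ``a positive-dimensional PCF family with one combinatorial type'' to ``a positive-dimensional component of $\mathrm{Fix}(\sigma_{\varphi})$'' is not automatic: sharing cycle and tail lengths of critical points does not by itself make two rational maps Thurston equivalent as marked branched covers, and only Thurston-equivalent maps give fixed points of the \emph{same} pullback map. The usual repair is local: in a connected holomorphic family the postcritical points move holomorphically and without collision, so nearby members are isotopic rel the postcritical set, hence Thurston equivalent, and rigidity then forces them to be M\"{o}bius conjugate; you should say this explicitly. Second, you mix up the roles of the two analytic statements: invariant multicurves and Thurston obstructions govern \emph{existence} of fixed points, whereas the rigidity (uniqueness, and discreteness of the fixed locus) needed here comes from the norm of the derivative of $\sigma_{\varphi}$ at a fixed point being strictly less than $1$ unless the orbifold is Euclidean. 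Your later paragraph about the pushforward on quadratic differentials is the correct mechanism, so the earlier appeal to obstructions is superfluous and slightly misleading, but not a fatal error.
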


\begin{rem}It is expected that the PCF equations meet transversally, i.e. fixing a cycle and tail length, the scheme of non-Latt\`{e}s PCF maps is a finite set of reduced points; see Epstein's work in Section 5 of~\cite{Eps3}. Favre and Gauthier~\cite{FG} also show this explicitly for some relations involving PCF polynomials.\end{rem}

It is not a coincidence that the Latt\`{e}s maps form the sole counterexample to both Thurston's rigidity and McMullen's theorem. McMullen's proof heavily uses
rigidity. Briefly, his method in~\cite{McM} is to assume an isospectral family exists, and then label an infinite set of so-called \textbf{repelling points}, that
is periodic points whose multiplier $\lambda$ satisfies $|\lambda| > 1$, accumulating at a point in the Julia set. A crucial fact about isospectral families is that
their periodic points move without collision with one another or with critical points. This labeling cannot be done globally, but can be done locally, using Ma\~{n}\'{e}-Sad-Sullivan stability theory. Using previous results showing that this infinite set moves holomorphically,
he shows that a critical point cannot pass through a point that is preperiodic to a repelling point; since there are infinitely many repelling points over
$\mathbb{C}$, this implies that in an isospectral family all critical points are preperiodic, and then Thurston's rigidity shows that such a family is Latt\`{e}s or
trivial.

We will follow a similar program. Many of the methods of complex analysis have been successfully ported to the non-archimedean setting, and although a full version
of Thurston's rigidity is beyond our reach, a sufficiently good approximation is proven in~\cite{Lev3}. First, we have,

\begin{defn}\label{tame}Let $\varphi$ be defined over a complete algebraically closed non-archimedean field $K$. We say that $\varphi$ is \textbf{tame} if the local
degree at any open analytic disk is not divisible by the residue characteristic $p$ of $K$. Equivalently, $\varphi = f/g$ is tame if and only if for each integral model
of $\varphi$ over $\mathcal{O}_{K}$, the reduction mod the maximal ideal is tamely ramified after clearing common factors of the reductions of $f$ and $g$. If
$\varphi$ is defined over a global field, we say it is tame if it is tame when we regard it as a map over every completion.\end{defn}

\begin{rem}Whenever $\deg\varphi < p$, or whenever $\varphi$ is the composition of maps of degree less than $p$, $\varphi$ is tame.\end{rem}

We have as a prior result~\cite{Lev3},

\begin{thm}\label{auto}Let $\varphi(z) \in F(z)$ where $F$ is a global function field such that $\chr F = p$. If $\varphi$ is PCF and tame at every place of $F$, then the
multipliers of $\varphi$ are all defined over $\overline{\mathbb{F}_{p}}$.\end{thm}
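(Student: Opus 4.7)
Write $F = k(C)$ for a smooth projective curve $C$ over the constant field $k \subset \overline{\mathbb{F}_{p}}$, so that $\varphi \in F(z)$ is a one-parameter family of rational maps of degree $d$ parametrized by $C$. A multiplier $\lambda$ of a periodic cycle is then an algebraic function on some finite cover of $C$, and the conclusion $\lambda \in \overline{\mathbb{F}_{p}}$ is equivalent to $\lambda$ being a constant function, i.e., having Weil height zero. So the plan is to show $h(\lambda)=0$: in a global function field of characteristic $p$, this exactly characterizes the elements of $\overline{\mathbb{F}_{p}}$.

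The strategy is to establish a Silverman--Ingram-type inequality of the form
\[
h(\lambda) \leq C(d,n) \sum_{c \text{ critical}} \hat{h}_{\varphi}(c),
\]
where $\hat{h}_{\varphi}$ is the canonical dynamical height attached to the family $\varphi$. Because $\varphi$ is PCF, each critical point (viewed as a section over $C$) is preperiodic, and therefore $\hat{h}_{\varphi}(c) = 0$ for every critical $c$. The right-hand side vanishes, forcing $h(\lambda) = 0$, and hence $\lambda \in \overline{\mathbb{F}_{p}}$. PCF thus does all the work through one dynamical fact (the vanishing of critical canonical heights), with the rest of the argument being an inequality not involving the PCF hypothesis directly.

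I would establish the inequality place by place. At each place $v$ of $F$, one works on the Berkovich projective line over $\mathbb{C}_{v}$. Tameness at $v$ is the crucial input because it unlocks the non-archimedean Fatou--Julia theory of Rivera-Letelier and Benedetto: for tame maps, the Julia set is the closure of repelling periodic points, attracting cycles absorb critical orbits, and periodic linearization works as in the classical setting. These tools let one bound $\log|\lambda|_v$ in terms of local contributions to $\hat{h}_{\varphi}$ of the critical orbit at $v$—either through linearization near a repelling cycle if $|\lambda|_v>1$, or through the position of the captured critical orbit inside the basin if $|\lambda|_v<1$. Summing over all places and invoking the product formula turns the local estimates into the desired global bound.

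The main obstacle is the places of bad reduction that are still tame in the sense of Definition~\ref{tame}. At good-reduction places the estimate is essentially formal, but global tameness permits bad reduction, and there one must argue intrinsically on Berkovich space with only tameness in hand, using the structure of the reduction map of $\varphi$ on the Berkovich tree rather than any integral model. This is precisely where Example~\ref{counter} shows the wild case to be a genuine obstruction: without tameness, the postcritical set can fail to be discrete on Berkovich space, destroying the Fatou--Julia machinery on which the whole argument relies. Handling the tame-but-bad-reduction case uniformly is where the bulk of the technical work in~\cite{Lev3} is concentrated.
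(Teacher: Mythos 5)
First, a point of comparison: the present paper does not prove Theorem~\ref{auto} at all; it is imported as Corollary~1.7 of~\cite{Lev3}. So there is no internal proof to measure your plan against, and it has to be judged on its own terms.

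Judged that way, there is a genuine gap: everything rests on the asserted inequality $h(\lambda) \leq C(d,n)\sum_{c}\hat{h}_{\varphi}(c)$ with \emph{no additive constant}, and you never prove it -- you explicitly defer it to ``the bulk of the technical work in~\cite{Lev3},'' which is to say the plan assumes the hard core of the theorem rather than establishing it. The zero-constant feature is not a technicality: over number fields the analogous inequality is false, since $z^{2}-1$ is PCF (so the right-hand side vanishes) while its fixed-point multipliers $1\pm\sqrt{5}$ have positive height. Hence any proof must exploit the non-archimedean/function-field situation at every single place, including tame places of bad reduction, and that local analysis is exactly what is missing from the sketch. A cleaner way to see what is actually required: since an element of a global function field (or a finite extension) lies in $\overline{\mathbb{F}_{p}}$ if and only if it is integral at every place, the theorem is equivalent to the purely local statement that a tame PCF map over $\mathbb{C}_{v}$ has no repelling cycles, i.e.\ $|\lambda|_{v}\leq 1$ for every multiplier at every place. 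Once you have that, no canonical heights, summation over places, or product formula are needed; without it, the proposed height inequality is unsupported.

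Some of the tools you invoke as if standard are also not available in the generality claimed. Over a non-archimedean field an attracting cycle need \emph{not} attract a critical orbit (this failure is precisely one of the delicate phenomena the tame theory in~\cite{Lev3} is designed to handle), and ``Julia set equals closure of repelling points'' requires the existence of a repelling point in the first place (B\'{e}zivin's theorem, as used in Section~\ref{outline}). So the local estimates ``through linearization near a repelling cycle or through the position of the captured critical orbit'' cannot be carried out as described; making them work under tameness at places of bad reduction is the actual content of the cited result, not a routine verification.
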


\begin{rem}An immediate corollary of Theorem~\ref{auto} is that if we can prove McMullen's theorem for a family, then we can prove the finiteness portion of
Thurston's rigidity for it as long as the local degrees are not divisible by $p$. In~\cite{Sil96} it is proven that $\Lambda_{1}$ is an isomorphism over
$\mathbb{Z}$ from $\mathrm{M}_{2}$ to a plane in $\mathbb{A}^{3}$, and thus Thurston's rigidity is also true for quadratic maps in characteristic $0$ or $p \geq
3$.\end{rem}

Like McMullen, we will prove that in an isospectral family we can label a set of periodic points that accumulate at a point that is preperiodic to a repelling point, and that if a critical point passes through the preperiodic point it is equal to it for all maps in the family. We do not have a non-archimedean Ma\~{n}\'{e}-Sad-Sullivan stability theory, but we do have a way to locally label an infinite set of repelling points, using work of Baldassarri~\cite{Bal}. This will suffice to show that an isospectral map is PCF. This requires a repelling point in $\mathbb{P}^{1}_{K}$ where $K$ is a complete algebraically closed characteristic-$p$ field; unlike in the complex case, a repelling point is not guaranteed to exist. However, if we start from a global field $F$, if the multipliers are not all in $\overline{\mathbb{F}_{p}}$ then we can take the completion with respect to a valuation that occurs in the denominator of a multiplier.

We will prove,

\begin{thm}\label{main}Let $F$ be a global function field of characteristic $p$. Suppose that $C$ is an isospectral one-parameter family of rational functions defined over $F$ all of which are tame. Suppose further that the multipliers are not all contained in $\overline{\mathbb{F}_{p}}$. Then $C$ is trivial: that is, all maps in $C$ are conjugate to one another, and $C$ maps down to a single point of $\mathrm{M}_{d}$.\end{thm}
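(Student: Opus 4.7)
The plan is to follow the roadmap sketched in the introduction: force an arbitrary isospectral family to consist of PCF maps via a repelling-point accumulation argument, and then derive a contradiction from Theorem~\ref{auto}. First, since the multipliers are by hypothesis not all contained in $\overline{\mathbb{F}_{p}}$, I can choose a periodic multiplier $\lambda$ together with a place $v$ of $F$ at which $\lambda$ has negative valuation, and pass to the completion of $F$ at $v$ and then to a complete algebraically closed field $K$ containing it. At this place $|\lambda|_{v} > 1$, so the corresponding periodic cycle is honestly repelling over $K$. This step is what the hypothesis on the multipliers buys us, and it supplies the starting repelling cycle that the complex proof extracts trivially from the size of the Julia set.

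Second, I would exploit the defining feature of an isospectral family: as the parameter varies over $C$, no two periodic points merge, and no periodic point collides with a critical point, because either event would perturb the multiplier of some cycle. Combined with the rigid-analytic continuation technique of Baldassarri~\cite{Bal}, this lets me locally (on small rigid disks in the parameter space) label the starting repelling cycle and, by iterating backwards, an infinite set of its preperiodic preimages. These labeled points remain distinct throughout their disk of definition, and they accumulate at a point $z_{0}$ in the Julia set of some map in the family, giving an infinite sequence of labeled preperiodic-to-repelling points.

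Third, and this is where the main work lies, I would show that whenever a critical point $c(t)$ of the family happens to land at one of these labeled preperiodic points at some parameter, the coincidence must in fact hold identically on $C$. The mechanism is a non-archimedean analogue of McMullen's argument: the expansion along the repelling cycle, transferred backwards through the tame local degree at the critical point, is incompatible with a moving critical point that only hits the labeled preperiodic point at an isolated parameter without violating the non-collision of periodic points. Since a tame map of degree $d$ has only $2d - 2$ critical points but we have infinitely many labeled preperiodic targets, after discarding finitely many we conclude that every critical point of every map in $C$ is preperiodic; i.e.\ the family is PCF.

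Finally, Theorem~\ref{auto} applies: a PCF tame map over a global function field of characteristic $p$ has all multipliers in $\overline{\mathbb{F}_{p}}$. This directly contradicts the standing hypothesis that some multiplier lies outside $\overline{\mathbb{F}_{p}}$, so no non-trivial isospectral family with non-algebraic multipliers can exist, and $C$ must map to a single point of $\mathrm{M}_{d}$. The principal obstacle is the third step: in McMullen's complex proof one uses the global holomorphic motions furnished by Ma\~{n}\'{e}--Sad--Sullivan, which have no analogue over $K$, so the rigid-analytic labels produced by Baldassarri must be glued or controlled carefully via the isospectral rigidity of the cycles, and the tameness hypothesis is essential both to count critical points and to push the local expansion estimate through the critical orbit.
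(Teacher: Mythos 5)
Your roadmap matches the paper's at the top and bottom — extract a repelling cycle from a place where a multiplier has negative valuation, use Baldassarri-type rigid-analytic continuation to label points locally in the parameter, and finish by contradicting Theorem~\ref{auto} — but the decisive middle step, concluding that \emph{every} critical point is preperiodic, is a non sequitur as you state it. Your third step only constrains critical points that actually land on one of your labeled points at some parameter; nothing in your argument prevents a critical orbit from simply avoiding all labeled points forever while never becoming preperiodic, and the count ``$2d-2$ critical points versus infinitely many targets'' does not close this gap. The paper needs a genuinely separate ingredient here (Lemma~\ref{preper} via Lemma~\ref{mono}): if a critical point $x$ is never preperiodic to a repelling cycle, then the forward-orbit functions $\varphi^{i}(x)$, viewed as separable maps from the parameter curve to $\mathbb{P}^{1}$ minus three repelling points, come from a finite list by a de Franchis/Riemann--Hurwitz count; hence either two of them coincide, making $x$ persistently preperiodic after all, or almost all are constant, which determines $\varphi$ at more than $2d+1$ points and forces the family to be trivial. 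That finiteness statement is the positive-characteristic surrogate for Montel/Ma\~{n}\'{e}--Sad--Sullivan stability and is exactly what is missing from your outline (tameness is used there to guarantee separability of the orbit functions).

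A second, smaller divergence undermines your persistence step as well. You label backward preimages of the single repelling cycle, whereas the paper invokes B\'{e}zivin's theorem to produce infinitely many repelling \emph{periodic} points (with the Julia set their closure) and labels those, locally on disks made unramified by first enlarging the moduli data to kill automorphisms (Lemma~\ref{unr}, then Lemma~\ref{hope} and Corollary~\ref{inflab}). This choice matters: the proof of Lemma~\ref{persist} rests on the fact that the labeled points $z_{i}$ can never collide with a critical point because a periodic point with nonzero multiplier cannot be critical, which yields the ultrametric estimate $|x(c)| \leq |z_{i}(0)| \to 0$ and hence $x \equiv 0$ on the disk, hence on all of $C'$. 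A preperiodic preimage of a repelling cycle can perfectly well be a critical point — that is precisely what postcritical finiteness produces — so with your labeled set the non-collision fails, and the appeal to ``expansion along the repelling cycle transferred backwards through the tame local degree'' has no concrete mechanism behind it; the paper's mechanism is the maximum-modulus-type bound above, not an expansion estimate. You also omit the automorphism-free enhanced moduli space, without which the unramifiedness needed to define the local analytic labels is not available.
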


\begin{rem}The Latt\`{e}s family has multipliers defined over $\overline{\mathbb{F}_{p}}$, and so is automatically excluded from the hypotheses of the
theorem.\end{rem}

Let us now prove Theorem~\ref{intromain} assuming Theorem~\ref{main}. We will prove the slightly stronger corollary:

\begin{cor}\label{maincor}Let $k$ be an algebraically field of characteristic greater than $d$. For sufficiently large $n$, there are only finitely many spectra in $\mathbb{A}^{K_{1} + \ldots + K_{n}}_{k}$, including the Latt\`{e}s multiplier spectra, for which there is a positive-dimension family of morphisms in $\mathrm{M}_{d}(k)$ with these spectra.\end{cor}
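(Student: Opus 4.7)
The plan is to combine a Noetherian stabilization of the positive-dimensional-fiber loci of $\Lambda_{n}$ with an application of Theorem~\ref{main} to the generic leaf of an alleged ``family of isospectral families''.

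I would first carry out the stabilization. Let $Y_{n}\subseteq\mathrm{M}_{d}$ denote the closed locus where $\Lambda_{1}\times\cdots\times\Lambda_{n}$ has a positive-dimensional fiber; this is closed by upper semicontinuity of fiber dimension. Because $\Lambda_{n+1}^{-1}(\Lambda_{n+1}(x))\subseteq\Lambda_{n}^{-1}(\Lambda_{n}(x))$, the sequence is decreasing, so by Noetherianity of $\mathrm{M}_{d}$ it stabilizes at some index $N$ to a closed subset $Y_{\infty}$. For $n\geq N$, the $\Lambda_{n}$-fibers through points of $Y_{\infty}$ coincide with the true isospectral classes (the fibers of $\Lambda_{m}$ for all $m\geq N$). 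Since $Y_{\infty}$ is cut out by equations defined over the prime field, its finitely many irreducible components $Y_{1},\ldots,Y_{r}$ descend to $\overline{\mathbb{F}_{p}}$. The set $E_{n}$ of spectra with positive-dimensional $\Lambda_{n}$-fiber equals $\bigcup_{i}\Lambda_{n}(Y_{i})$, so the corollary reduces to showing each $\Lambda_{n}(Y_{i})$ is a single point.

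The main obstacle, and what I expect to be the hardest part, is that $Y_{i}$ need not itself be a single isospectral class; a priori it could be a higher-dimensional variety foliated by isospectral leaves of lower dimension with varying spectra. To rule this out, assume $V_{i}^{*}:=\Lambda_{n}(Y_{i})$ has positive dimension, and pick a curve $C^{*}\subseteq V_{i}^{*}$. After descent $C^{*}$ is defined over a finite field $\mathbb{F}_{q}\subseteq\overline{\mathbb{F}_{p}}$, so its function field $F_{0}=\mathbb{F}_{q}(C^{*})$ is a global function field of characteristic $p$. The generic fiber of the projection $Y_{i}\cap\Lambda_{n}^{-1}(C^{*})\to C^{*}$ is a positive-dimensional isospectral subvariety of $\mathrm{M}_{d,F_{0}}$, and any curve inside it is a one-parameter isospectral family $\mathcal{C}$ over $F_{0}$. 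Because $\chr k>d$, $\mathcal{C}$ is automatically tame. Theorem~\ref{main} then forces either $\mathcal{C}$ to be trivial in $\mathrm{M}_{d}$ or its multipliers to lie in $\overline{\mathbb{F}_{p}}$. Triviality is impossible since $\mathcal{C}$ has positive dimension in $\mathrm{M}_{d}$, so the spectrum of $\mathcal{C}$ has all coordinates in $\overline{\mathbb{F}_{p}}$. But by construction this spectrum is the generic point of $C^{*}\subseteq\mathbb{A}^{K_{1}+\cdots+K_{n}}$, whose coordinate functions cannot all be constant on a positive-dimensional curve. This contradiction forces $V_{i}^{*}$ to be zero-dimensional, hence a single point.

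The characteristic-zero case is handled separately via the Lefschetz principle: any positive-dimensional isospectral family descends to $\mathbb{C}$, where Theorem~\ref{mcm} forces it to be Latt\`{e}s, and the finitely many Latt\`{e}s families of degree $d$ (indexed by finite data $(m,T)$) together yield only finitely many multiplier spectra. The main technical burden in writing the positive-characteristic step is descent bookkeeping: verifying that $Y_{\infty}$ and its components really are defined over $\overline{\mathbb{F}_{p}}$, and that the generic leaf can be organized as a one-parameter family over a bona fide global function field. Once that is arranged, the clash between Theorem~\ref{main} (algebraicity of the multipliers over $\mathbb{F}_{p}$) and the transcendence of the generic point of the curve $C^{*}$ delivers the contradiction in one line.
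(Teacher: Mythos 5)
Your overall strategy is the same as the paper's: stabilize the locus where the multiplier maps have positive-dimensional fibers, extract from it a one-parameter isospectral family over a function field whose period-$\le n$ multipliers are non-constant, and contradict Theorem~\ref{main} (tameness being automatic since $\chr k>d$). Two of your refinements are genuine improvements in bookkeeping: you work with the bad locus in $\mathrm{M}_d$ rather than in the target affine space, and you take care to choose the base curve $C^*$ over a finite field so that the base really is a global function field --- a point the paper glosses over by applying Theorem~\ref{main} over $k(Y_n)$.

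There is, however, one step whose justification as written does not hold up, and it carries real weight. You assert that once the descending chain of loci $Y_n$ stabilizes at $Y_\infty$, ``the $\Lambda_n$-fibers through points of $Y_\infty$ coincide with the true isospectral classes.'' Stabilization of the \emph{locus} of points with positive-dimensional fibers does not imply stabilization of the \emph{fibers}: $Y_n=Y_{n+1}$ is compatible with the fibers through a given point continuing to shrink as $n$ grows, and without this claim your curve $\mathcal{C}$ is only known to have constant multipliers up to period $n$, which is not enough to invoke Theorem~\ref{main} (full isospectrality, or at least the hypothesis of Proposition~\ref{almostiso}, is needed). The claim itself is true and can be rescued by a short additional Noetherian argument: the closed equivalence relations $R_n=\{(x,y):\Lambda_1\times\cdots\times\Lambda_n(x)=\Lambda_1\times\cdots\times\Lambda_n(y)\}$ form a descending chain of closed subsets of $\mathrm{M}_d\times\mathrm{M}_d$, hence stabilize, and $R_N=\bigcap_n R_n$ gives exactly the uniform statement you want. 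The paper gets the same effect differently: it stabilizes the generic relative dimension over the bad locus in the target, deduces that the forgetful maps $X_n\to X_m$ are generically one-to-one, and then builds a compatible tower of birationally equivalent curves $Y_i$ so that the family $Z$ is isospectral in all periods by construction. With the $R_n$ fix inserted (and the minor caveat that the positive-dimensional fiber components through points of a component $Y_i$ may lie in other components of $Y_\infty$, so one should run the final argument with $Y_\infty$ rather than a single $Y_i$), your argument is correct and arguably cleaner, since it yields a uniform period bound rather than only a generic statement. The characteristic-zero aside is harmless but unnecessary, as the corollary is stated only for $\chr k>d$.
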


\begin{proof}Let $X_{n}$ be the union of all positive-dimension varieties in $\mathbb{A}^{K_{1} + \ldots + K_{n}}_{k}$ on which the map $\Lambda_{1} \times \ldots \times \Lambda_{n}$ has positive relative dimension. Note that if $m < n$, the map $\Lambda_{1} \times \ldots \times \Lambda_{m}$ commutes with the composition of $\Lambda_{1} \times \ldots \times \Lambda_{n}$ with the projection obtained by forgetting all period-higher-than-$m$ multipliers. This gives us maps from $X_{n}$ to $X_{m}$, commuting with the multiplier spectrum maps on $\mathrm{M}_{d}$. We need to prove that for sufficiently large $n$, $X_{n}$ is empty. Suppose to the contrary that $X_{n}$ is nonempty for all $n$. For sufficiently large $n$, the generic relative dimension of $\Lambda_{1} \times \ldots \times \Lambda_{n}$ on the preimage of $X_{n}$ stabilizes; therefore, for sufficiently large $n$ and $m$, with $m < n$, the map $X_{n} \to X_{m}$ is generically one-to-one. Informally, it means that for sufficiently large $n$, finding more multipliers will not give us any more information.

Now, label an infinite sequence of curves $Y_{i}$ contained in $X_{i}$, where $i \geq n$, such that the curves are all birationally equivalent and, if $i > j$, $Y_{i}$ maps to $Y_{j}$ under the forgetful map on multiplier spectra. Let $Z$ be a subvariety of the common preimage of all the $Y_{i}$s in $\mathrm{M}_{d}$ that maps to each $Y_{i}$ with relative dimension $1$. We can consider $Z$ to be a one-parameter family over a finite extension of $F = k(Y_{n})$, which is isospectral, mapping to a single point of $\mathbb{A}^{K_{1} + \ldots + K_{n}}_{F}$. Clearly, the multipliers are not contained in $\overline{\mathbb{F}_{p}}$. This contradicts Theorem~\ref{main}, which claims that $Z$ must be the trivial family.\end{proof}

Theorem~\ref{intromain} follows as soon as we exhibit two maps with two different multiplier spectra, proving that $\Lambda_{1} \times \ldots \times \Lambda_{n}$ doesn't collapse all of $\mathrm{M}_{d}$ to a single point. Finding such a pair of maps is easy: for example, $z^{d} + 1$ has just one fixed point that is also critical whereas $z^{d}$ has two, and fixed points are critical precisely when their multipliers are $0$.

The result is not constructive: it does not tell us what $n$ we need to choose to ensure that the map $\Lambda_{1} \times \ldots \times \Lambda_{n}$ will be
finite-to-one, nor does it tell us the degree of such a map. Unlike in McMullen's characteristic $0$ case, Theorem~\ref{intromain} does not even tell us what the
exceptions are to finiteness, i.e. what the finitely many spectra with positive-dimension preimages in Corollary~\ref{maincor} are. For some partial results in that
direction, see~\cite{HT, Sil96}.

Effective versions of Theorem~\ref{intromain} would require different techniques. Unlike the techniques in~\cite{HT}, we are unable to list the finite set of
exceptions in Corollary~\ref{maincor} (or any proper subset of $\mathrm{M}_{d}$ that contains all of them), although we conjecture that the Latt\`{e}s family is the
only one. Nor can we use these techniques to find or even bound the degree of the McMullen map, or the minimal period $n$ such that $\Lambda_{1} \times \ldots
\times \Lambda_{n}$ is generically finite.

We spend Section~\ref{outline} proving Theorem~\ref{main}.This proof is easier than in the complex case, since non-archimedean analysis is more rigid than complex analysis, making it easier to show that a critical point cannot move very freely in an isospectral family.

We give an application of the result in Sections~\ref{algorithms}. McMullen originally proved Theorem~\ref{mcm} in order to study generally convergent purely iterative root-finding algorithms. Following McMullen, we make the following definition:

\begin{defn}\label{iterdef}A \textbf{purely iterative root-finding algorithm} is a rational map $T: \Poly_{r} \dashrightarrow \Rat_{d}$, where $\Poly_{r} \subseteq \Rat_{r}$ is the space of degree-$r$ polynomials. Let us denote the image of a polynomial $f(z)$ under $T$ by $T_{f}(z)$. We say that $T$ is \textbf{generally convergent} if, for an open dense set of $f \in \Poly_{r}$, the sequence $T^{n}_{f}(z)$ converges to a root of $f$ on an open dense subset of $\mathbb{P}^{1}$.\end{defn}

The definition of generally convergent algorithms is made relative to a fixed topology. It is a question of Smale~\cite{Sma} whether such algorithms exist in the complex setting. McMullen proved that they do not when $r \geq 4$ in the complex setting, by showing that the image of a generally convergent algorithm, $T(\Poly_{r}) \subseteq \Rat_{d}$, is in fact an isospectral family, hence constant. We use an amplification of Theorem~\ref{main} and an elementary computation to prove,

\begin{thm}\label{iterative}There is no generally convergent purely iterative root-finding algorithm $T: \Poly_{r} \dashrightarrow \Rat_{d}$ over a non-archimedean field of residue characteristic $p$ if $p = 0$, $p > d$, or $p \geq r$.\end{thm}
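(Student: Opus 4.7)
The plan is to imitate McMullen's original argument, substituting Theorem~\ref{main} for Theorem~\ref{mcm}. Assume for contradiction that $T: \Poly_{r} \dashrightarrow \Rat_{d}$ is generally convergent. The first step is to normalize: by composing $T$ with a suitable $\PGL_{2}$-valued rational function on $\Poly_{r}$, arrange that $T$ descends to a rational map $\bar{T}: \Poly_{r}/\mathrm{Aff} \dashrightarrow \mathrm{M}_{d}$, where $\mathrm{Aff}$ is the affine subgroup of $\PGL_{2}$ acting on polynomials by change of coordinate. For $r \geq 4$ the domain has dimension at least $2$, so by slicing I obtain a non-constant one-parameter subfamily $C \subset \Poly_{r}/\mathrm{Aff}$ which, after spreading out, is defined over a global function field $F$ of characteristic $p$.

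The crucial step is to show that $\bar{T}(C)$ is isospectral in $\mathrm{M}_{d}$. General convergence forces each root of $f$ to be a fixed point of $T_{f}$ (being the limit of a convergent iteration) with multiplier of absolute value less than $1$ (because the basin contains an open set). These root-multipliers are rational functions of the coefficients of $f$, and the affine equivariance built into the normalization forces them to be constant along the affine orbits, hence constant along $C$. Extending this to the remaining periodic cycles — the fact that \emph{all} multipliers, not just those at the roots, are constant along $C$ — is where the real work lies; one propagates constancy using the algebraic constraint that periodic cycles in an isospectral family move without collision, together with the non-archimedean local labelings of periodic points developed in Section~\ref{outline}. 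I expect this step to be the main obstacle, replacing the holomorphic-motion argument of McMullen's complex proof.

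With isospectrality established, Theorem~\ref{main} applies. The tameness hypothesis is satisfied in each case: $p = 0$ is trivial, $p > d$ directly bounds the degree of $T_{f}$, and $p \geq r$ gives tameness via the presentation of $T_{f}$ as a rational combination of $f$ and its derivatives whose wild-ramification pattern is controlled by $r$. A separate verification shows the multipliers are not all contained in $\overline{\mathbb{F}_{p}}$: since the coefficients of $f$ vary genuinely along $C$, and the multipliers are nontrivial rational functions of these coefficients (with either $p > d$ or $p \geq r$ ensuring that the Vieta map from roots to elementary symmetric polynomials is separable), the multiplier field contains elements transcendental over $\mathbb{F}_{p}$. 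Theorem~\ref{main} then forces $C$ to be trivial --- all $T_{f}$ for $f \in C$ are mutually conjugate in $\Rat_{d}$.

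An elementary counting argument finishes the proof. Triviality says there is a single $\varphi \in \Rat_{d}$ to which every $T_{f}$ is conjugate; in particular, the $r$-tuples of attracting fixed points of $T_{f}$, which contain the roots of $f$, all lie in a single $\PGL_{2}$-orbit on $(\mathbb{P}^{1})^{r}/S_{r}$. But as $f$ varies non-trivially along $C$, its unordered root set sweeps out a positive-dimensional subvariety, which cannot be contained in any three-dimensional $\PGL_{2}$-orbit once $r \geq 4$. This is the required contradiction, and the theorem follows.
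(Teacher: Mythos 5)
There are two genuine gaps. First, your treatment of the case $p \geq r$ (and $p = 0$) does not work. You try to fold it into the isospectral machinery by claiming that $p \geq r$ yields tameness of $T_{f}$, but tameness is a condition on the degree-$d$ map $T_{f}$: if $p \leq d$ nothing about $r$ controls wild ramification of $T_{f}$, and Theorem~\ref{main} is simply unavailable. Moreover your endgame needs $r \geq 4$ (both the slicing of $\Poly_{r}/\mathrm{Aff}$ and the cross-ratio contradiction), while the theorem also covers $r = 2, 3$ with $p \geq r$ --- degrees for which generally convergent algorithms do exist over $\mathbb{C}$, so no purely ``McMullen-style'' argument can dispose of them. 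The paper handles $p \geq r$ or $p = 0$ by an entirely separate elementary computation: general convergence forces exactly the $r$ fixed points at the roots of $f$ to be attracting and all others repelling, and reducing the fixed-point relation $\sum_{i=1}^{d+1} 1/(1-\lambda_{i}) = 1$ modulo the maximal ideal gives $r \equiv 1 \bmod \mathfrak{m}$, i.e.\ $p \mid r-1$, contradicting $p \geq r$ or $p = 0$. You have no substitute for this argument.

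Second, in the case $p > d$, the step you flag as ``where the real work lies'' --- constancy of the multiplier spectra along the family --- is left unproven, and the sketch you give is partly circular (you invoke properties of isospectral families to prove isospectrality) and partly wrong: your claim that affine equivariance forces the multipliers at the roots of $f$ to be constant does not hold; these attracting multipliers descend to functions on $\Poly_{r}/\mathrm{Aff}$ but can genuinely vary along $C$. The paper's resolution is much simpler than a holomorphic-motion substitute: every cycle not corresponding to a root of $f$ must be repelling (attracting or indifferent cycles have open basins whose points do not converge to roots), so each such multiplier, as an algebraic function on the family, omits the infinitely many attracting values and is therefore constant; the finitely many possibly non-constant fixed-point multipliers are then absorbed by Proposition~\ref{almostiso}, the amplification of Theorem~\ref{main} allowing finitely many non-constant spectra, which is exactly what your appeal to Theorem~\ref{main} alone cannot cover. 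The same observation --- that a repelling cycle has multiplier of absolute value greater than $1$ --- is also the correct reason the multipliers are not all in $\overline{\mathbb{F}_{p}}$; your separability/transcendence argument via the Vieta map does not establish this. Your final step (all $T_{f}$ conjugate to one $\varphi$, roots of $f$ confined to a single $\PGL_{2}$-orbit, contradiction with the cross-ratio for $r \geq 4$) does agree with the paper.
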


\section{Proof}\label{outline}

Let $C$ be an isospectral irreducible curve as in the hypotheses of Theorem~\ref{main}. Since the multipliers of $C$ are not defined over $\overline{\mathbb{F}_{p}}$, there is a place of $F$ such that one multiplier is repelling. Let $K$ be an algebraically closed field, complete with respect to a nontrivial non-archimedean absolute value $|\cdot|$; we choose $K$ to be an algebraically closed completion of $F$ such that one of the multipliers of the family $C$ is repelling. By a result of B\'{e}zivin~\cite{Bez}, this means that there are infinitely many repelling points. Moreover, the Julia set of $\varphi$ is the closure of the set of repelling points. B\'{e}zivin's paper assumes characteristic $0$, but the proofs and the result port perfectly to positive characteristic as long as the map is tamely ramified.

Moreover, repelling points move without collision: that is, if we mark any number of them, they will stay distinct, since a fixed point is repeated iff it has
multiplier $1$. McMullen's proof in fact hinges on using this to show that the repelling points, and with them the entire Julia set, move rigidly, so that after a
local analytic conjugation the Julia set is constant in the family.

We now mark all critical points and enough other special points (such as critical values or periodic points) to obtain an enhanced moduli space $\mathrm{M}_{d}(\Gamma)$ so that there are no nontrivial automorphisms of
any map in the family $C' = C \times_{\mathrm{M}_{d}} \mathrm{M}_{d}(\Gamma)$. The natural projection $\mathrm{M}_{d}(\Gamma) \to \mathrm{M}_{d}$ is finite-to-one,
and by abuse of notation we denote the multiplier map on $\mathrm{M}_{d}(\Gamma)$ by $\Lambda_{n}$, sending each enhanced $\varphi$ to an ordered set of the
multipliers of all the marked period-$n$ points and the spectrum of the multipliers of the unmarked points. For example, if we mark all critical points and values,
then there are automorphisms if and only if there are exactly $2$ distinct ones, which is the case only if $\varphi$ is conjugate to $z^{\pm d}$, in which case the
multipliers lie in $\mathbb{F}_{p}$ and the map is excluded from the hypotheses of Theorem~\ref{main}.

The upshot is that,

\begin{lem}\label{unr}Suppose that $C'$ is chosen in such a way that for all $\varphi \in C'$, $\Aut\varphi$ is trivial. Let $C'' = C'
\times_{\mathrm{M}_{d}(\Gamma)} \mathrm{M}_{d}(\Gamma')$ where $\Gamma'$ enhances $\Gamma$ with one additional repelling periodic point. Then the natural projection
$\pi: C'' \to C'$ is unramified.\end{lem}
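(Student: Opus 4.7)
My plan is to verify unramifiedness locally at each point of $C''$ via a Jacobian (implicit-function) computation. Take an arbitrary $(\varphi_{0}, z_{0}) \in C''$, where $z_{0}$ is the additional marked repelling period-$n$ point of $\varphi_{0}$ adjoined by $\Gamma'$. In a neighborhood of $(\varphi_{0}, z_{0})$, I will describe $C''$ inside $C' \times \mathbb{P}^{1}$ as the zero scheme of $\Psi(\varphi, z) := \varphi^{n}(z) - z$, together with the open conditions singling out the repelling stratum and excluding points already marked by $\Gamma$.

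The central computation will be $\partial_{z}\Psi(\varphi_{0}, z_{0}) = (\varphi_{0}^{n})'(z_{0}) - 1 = \lambda - 1$, where $\lambda$ is the multiplier at $z_{0}$. Since $z_{0}$ is repelling, $|\lambda| > 1$; in particular $\lambda \neq 1$, so $\partial_{z}\Psi$ is a nonzero element of the residue field at $(\varphi_{0}, z_{0})$. The Jacobian criterion then gives smoothness of the zero scheme of $\Psi$ at this point, and the implicit function theorem presents it locally as a graph $z = z(\varphi)$ over a neighborhood of $\varphi_{0}$ in $C'$. Hence $\pi$ is locally an isomorphism at $(\varphi_{0}, z_{0})$, so it is \'{e}tale and in particular unramified there. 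Since the point was arbitrary, $\pi$ is unramified on all of $C''$.

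The triviality of $\Aut\varphi$ on $C'$ will be used to ensure that $\mathrm{M}_{d}(\Gamma)$ and $\mathrm{M}_{d}(\Gamma')$ are honest schemes rather than stacks, so that the fiber products $C'$ and $C''$ are well-behaved and no spurious ramification is produced by automorphisms identifying distinct markings. The heart of the argument is entirely local and does not directly invoke isospectrality: restricting $\Gamma'$ to \emph{repelling} periodic points is precisely what forces $\partial_{z}\Psi \neq 0$ and so rules out ramification, since it is a collision of fixed points that would put the multiplier at the limit equal to $1$. I do not anticipate a serious obstacle; the only care needed is the bookkeeping between the full scheme of period-$n$ points and its open subscheme of repelling ones, and remembering that the isospectral hypothesis (combined with B\'{e}zivin's result cited earlier in this section) is what guarantees the existence of such repelling points in the first place, so that $C''$ is nonempty and $\pi$ is genuinely a cover.
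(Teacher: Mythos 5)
Your argument is correct and is essentially the paper's proof: both rest on the observation that a repelling period-$n$ point has multiplier $\lambda \neq 1$, so it is a simple root of the period-$n$ equation (your $\partial_z(\varphi^n(z)-z) = \lambda - 1 \neq 0$ is exactly the paper's "repeated roots have multiplier $1$"), with triviality of $\Aut\varphi$ invoked in both to exclude ramification arising from the moduli quotient identifying distinct markings. Your version merely makes the implicit-function/Jacobian step explicit where the paper states it briefly.
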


\begin{proof}First, let $n$ be the period, and let $f$ be the polynomial whose roots are the period-$n$ points that are unmarked in $\Gamma$. The repelling points
are simple roots of $f$ since repeated roots have multiplier $1$, so ramification can only occur if there exists some automorphism of $\varphi$ mapping $z_{1}$ to
$z_{2}$ where $z_{1}$ and $z_{2}$ are both roots of $f$. Since $\varphi$ has no automorphisms preserving the enhanced data of $\Gamma$, there is no
ramification.\end{proof}

Now, let us change coordinates so that $0$ is persistently preperiodic to a repelling point. Note that the family parametrizing $\varphi$ is trivial if and only if the family parametrizing $\varphi^{n}$ is trivial for any (or all) $n$; this appears well-known to dynamicists, and is written down in Proposition 2.1 of~\cite{Lev4}. Therefore, we may apply iteration and assume that $\varphi(0)$ is fixed, and for simplicity we will conjugate so that $\varphi(0) = \infty$. Since $0$ is in the Julia set, there is a sequence of repelling periodic points converging to it for
every map in the family.

Recall now the following results from non-archimedean analysis:

\begin{prop}\label{nonarch}Let $C$ be a curve. Let $x \in C$, and let $D \ni x$ be a neighborhood of $x$ in $C$ that is analytically isomorphic to a disk in
$\mathbb{P}^{1}$. The following functions are analytic on $D$:

\begin{enumerate}
\item The restriction to $D$ of any morphism $f: C \to X$ for any curve $X$.\\
\item Any $g$ from $D$ to $C'$ where $f: C' \to C$ is a rational map and $f\circ g$ is the identity on $D$ and $f$ maps $g(D)$ to $D$ with local degree $1$.
\end{enumerate}\end{prop}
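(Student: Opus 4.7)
The plan is to reduce both parts to standard GAGA-type facts for rigid analytic disks.

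For (1), I would first shrink $D$ if necessary so that $f(D)$ lies in an affine open $U \subset X$ with coordinate $t$. Then $f|_{D}$ is determined by the pullback $f^{*}t$, which is a rational function on $C$ that is regular on $D$. Transporting via the analytic isomorphism $D \cong \{|z| < r\}$, this becomes a rational function in $z$ with no poles on the open disk. Such a rational function admits a convergent power series expansion there: each simple factor $1/(z-a)$ with $|a| \geq r$ expands as $-a^{-1}\sum_{n \geq 0}(z/a)^{n}$, which converges for $|z| < r \leq |a|$. If $f(D)$ cannot be contained in a single affine chart even after shrinking, cover $D$ by preimages of affine opens of $X$ and use that analyticity is a local condition.

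For (2), I would invoke the non-archimedean inverse function theorem. Fix $x \in D$ and put $y = g(x) \in C'$. Since a rational map from a smooth curve is a morphism, $f$ is defined near $y$, and the local-degree-one hypothesis there translates to $df_{y} \neq 0$, i.e., $f$ is \'etale at $y$. Hensel's lemma (or Weierstrass preparation) applied to $f(w) - z = 0$---with $w$ a uniformizer at $y$ and $z$ a coordinate on $D$ based at $x$---yields an analytic function $h$ on a neighborhood $D'$ of $x$ satisfying $h(x) = y$ and $f \circ h = \mathrm{id}_{D'}$. After shrinking $D'$ so that $f$ restricts to a bijection from some small neighborhood $V$ of $y$ onto $D'$, we conclude $g(x') = h(x')$ for every $x' \in D'$, since both lie in $V$ and map to $x'$ under $f$. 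Hence $g$ coincides near $x$ with the analytic function $h$, and varying $x$ over $D$ gives analyticity throughout.

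The main subtlety in (2) is the passage from \emph{local degree $1$} to \emph{\'etale}: in positive residue characteristic these can diverge when wild ramification is present, but a degree-$1$ map is necessarily a local isomorphism of local rings and is therefore automatically unramified, so the implication holds unconditionally. No issues arise from the indeterminacy locus of $f$, since a rational map from a smooth curve is everywhere a morphism.
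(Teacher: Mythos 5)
The paper does not actually prove Proposition~\ref{nonarch}: it is introduced with ``Recall now the following results from non-archimedean analysis'' and used as a known fact, with the quantitative input it really relies on (uniform radii for inverting an unramified projection) supplied later by the citation to Baldassarri in Lemma~\ref{hope}. So there is no in-paper argument to compare against; your proposal is the standard rigid-analytic justification one would give. Part (1) is fine: reduce to an affine chart, note that a regular function pulled back to the disk has no poles there, and expand (or, on each closed subdisk $|z|\leq s<r$, invert the nonvanishing denominator and let $s\to r$). Your observation that local degree $1$ forces $f'(y)\neq 0$ even in residue characteristic $p$ is also correct: the Weierstrass degree on small disks about $y$ is the order of vanishing of $f-f(y)$, so degree $1$ rules out any ramification, wild or tame.

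In part (2) there is one step you assert without justification: that for $x'$ near $x$ the point $g(x')$ lies in the small neighborhood $V$ of $y$. Nothing in the hypotheses makes $g$ continuous a priori --- it is just a set-theoretic section of $f$ --- so if $f$ has global degree bigger than $1$, $g(x')$ could in principle be a preimage of $x'$ on a different sheet, and then $g(x')\in V$ fails and the identification $g=h$ breaks. \'Etaleness at the single point $y$ is not enough; you must use the hypothesis that $f$ has degree $1$ on all of $g(D)$. With that reading the fix is immediate: degree $1$ on $g(D)$ means $f|_{g(D)}$ is injective with $g$ as its set-theoretic inverse, so either choose $V$ inside the preimage region containing $g(D)$ (as in Lemma~\ref{hope}, where the preimage of $D$ splits into disjoint disks each mapping with degree $1$), where uniqueness of the preimage gives $g(x')=h(x')$, or invoke directly that an analytic map of disks of Weierstrass degree $1$ is an analytic isomorphism, so its inverse $g$ is analytic. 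So the architecture of your argument is right, but as written the gluing of $g$ to the Hensel section uses a continuity of $g$ you have not established; route it through the degree-$1$ hypothesis on $g(D)$ instead.
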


Observe also that if in a projection map from $C'$ to $C$ a point $x \in C$ is unramified, there exists a neighborhood of $x$ in $C$ with a preimage disk that maps
onto it with degree $1$, so that we can apply the second case of Proposition~\ref{nonarch} and obtain a local inverse. We have,

\begin{lem}\label{hope}Let $C'$ and $C''$ be as in the hypotheses of Lemma~\ref{unr}, and let $x \in C'$. There exists a disk $D \ni x$ in $C'$ that is analytically
isomorphic to a $\mathbb{P}^{1}$-disk, depending only on $C'$, with the following property: the preimage of $D$ in $C''$ is a disjoint union of
$\mathbb{P}^{1}$-disks each mapping into $D$ with degree $1$.\end{lem}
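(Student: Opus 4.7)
The plan is to exploit that $\pi\colon C''\to C'$ is finite and unramified (Lemma~\ref{unr}), so it is \'etale at every preimage of $x$, and then use Proposition~\ref{nonarch}(2) to produce an analytic inverse on a $\mathbb{P}^1$-disk. The nontrivial content beyond the standard local structure of an \'etale map is the clause ``depending only on $C'$'', which forces one and the same disk to work simultaneously for every choice of enhancement $\Gamma'$, hence for every choice of marked repelling periodic point.

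First I would fix $x\in C'$ together with a specific $C''$ and enumerate the preimages $y_1,\ldots,y_k\in\pi^{-1}(x)$; the description of $C''$ as a fibre product over $\mathrm{M}_d(\Gamma)$ makes $\pi$ finite, and Lemma~\ref{unr} says it is unramified at each $y_i$. So each $y_i$ sits in a neighborhood $U_i$ mapping with local degree $1$ onto a disk $D_i\ni x$ in $C'$, and the second part of Proposition~\ref{nonarch} supplies an analytic inverse $\sigma_i\colon D_i\to U_i$. Setting $D=\bigcap_i D_i$ produces $k$ disjoint analytic sections $\sigma_i\colon D\to C''$ whose images are $\mathbb{P}^1$-disks mapping to $D$ with degree $1$; a fibre count then forces $\pi^{-1}(D)=\bigsqcup_i\sigma_i(D)$.

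The main obstacle is the uniformity clause, i.e. that the radius of $D$ depend only on $C'$ and not on $\Gamma'$: different enhancements track different repelling periodic points whose periods can be arbitrarily large, so naively the domain of convergence of each $\sigma_i$ could shrink with the period. To extract a uniform $D$ I would recast each section as a Hensel lift of a repelling root $y$ of $\varphi_x^n(z)-z$, whose non-degeneracy is controlled by $|(\varphi_x^n)'(y)-1|$. Because $y$ is repelling, $|(\varphi_x^n)'(y)|>1$, and the ultrametric inequality forces $|(\varphi_x^n)'(y)-1|=|(\varphi_x^n)'(y)|>1$, so the non-degeneracy only improves with $n$. Isospectrality of $C'$ pins the multipliers along the family and so prevents repelling points from colliding with each other or with non-repelling periodic points as $t$ varies in $D$; this yields a lower bound on the radius of each Hensel disk that depends only on the analytic structure of $C'$ near $x$. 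The hardest step is to make that lower bound quantitative and verify that every Hensel disk, as $\Gamma'$ ranges over all possible enhancements, contains a common sub-disk $D\ni x$ on which every section converges simultaneously.
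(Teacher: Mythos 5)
Your setup (finiteness and unramifiedness of $\pi$ from Lemma~\ref{unr}, sections via Proposition~\ref{nonarch}(2), intersecting disks and counting fibres) is fine, and you correctly identify the real content of the lemma: the radius of $D$ must be uniform over all enhancements $\Gamma'$, i.e.\ over repelling points of arbitrarily large period. But that is exactly the step your proposal does not supply. Your Hensel argument controls only the nondegeneracy term: repellingness does give $|(\varphi_x^n)'(y)-1|=|(\varphi_x^n)'(y)|>1$, but the radius of the disk on which a Hensel/implicit-function section of $\varphi_t^n(z)-z=0$ converges also depends on how the equation varies with the parameter $t$, and $\varphi_t^n$ has degree $d^n$ with coefficients (and $t$-derivatives) that in general grow without bound in $n$. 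So the naive lower bound on the Hensel radius can shrink to $0$ as the period grows, and your appeal to isospectrality and non-collision does not by itself produce the quantitative, period-independent bound; you acknowledge this yourself by deferring ``the hardest step.'' As written, the proposal proves the pointwise local-section statement but not the clause ``depending only on $C'$,'' which is the whole point of the lemma.

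The paper closes this gap by an external input rather than a direct estimate: after first shrinking to disks that avoid the singular points of $C'$ (a preliminary you omit, though it is minor), it invokes Theorem 0.2 of~\cite{Bal}, which constructs the radius of $D$ explicitly and uniformly, so that one disk works for every $C''$ at once. To complete your route you would either need to reprove a uniform non-archimedean implicit-function estimate of that strength (controlling the $t$-variation of $\varphi_t^n$ independently of $n$, e.g.\ by exploiting that the sections land near the Julia point $0$ and that repelling multipliers only grow), or simply cite the same result; without one of these, the uniformity claim remains an assertion.
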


\begin{proof}First, we need to restrict to disks that miss the singular points of $C'$; these are of course uniform. Now, Theorem 0.2 in~\cite{Bal} constructs the radius of $D$ explicitly.\end{proof}

Observe that as a trivial consequence of joining Lemma~\ref{hope} and Proposition~\ref{nonarch}, we obtain,

\begin{cor}\label{inflab}Suppose that $0$ is a persistently Julia point. For each point $\varphi \in C'$, there exists an infinite set of repelling points $z_{i}$
that converge to $0$, such that for some neighborhood of $D \ni \varphi$, the functions $z_{i}$ are all analytic on $D$. Moreover, the convergence of $z_{i}$ is
uniform on $D$.\end{cor}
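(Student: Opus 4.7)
The plan is to combine the two preceding results by applying them to an infinite sequence of covers, each obtained from marking a distinct repelling periodic point near $0$.

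I would first extract the sequence. B\'ezivin's theorem from the beginning of this section implies that the repelling periodic points of $\varphi$ are dense in its Julia set, and the persistence hypothesis places $0$ in that Julia set. This yields an infinite sequence $z_1, z_2, \ldots$ of repelling periodic points of $\varphi$ with $z_i \to 0$; denote the period of $z_i$ by $n_i$.

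Next I would promote each $z_i$ to an analytic function on a common disk about $\varphi$. For each $i$, let $\Gamma_i'$ enhance $\Gamma$ by singling out one additional marked period-$n_i$ repelling point, and let $C_i'' \to C'$ be the resulting pullback. By Lemma~\ref{unr} this cover is unramified, so Lemma~\ref{hope} furnishes a disk $D \ni \varphi$ whose preimage in $C_i''$ is a disjoint union of $\mathbb{P}^1$-disks, each mapping onto $D$ with degree one. The radius of $D$ is the Baldassarri radius from Lemma~\ref{hope}, which depends only on $C'$ and is therefore independent of $i$. Selecting the sheet through $(\varphi, z_i)$ and invoking Proposition~\ref{nonarch}(2) produces an analytic section of $C_i'' \to C'$ over $D$; composing with the evaluation map at the marked point yields the analytic extension $z_i: D \to \mathbb{P}^1$.

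The last step, and the one I expect to be the main obstacle, is uniform convergence of the $z_i$ to $0$ on $D$. I would exploit the persistent identity $\varphi_\psi(0) = \infty$: after possibly shrinking $D$, I can find a uniform small disk $U$ about $0$ in $\mathbb{P}^1$ and an analytic family of inverse branches of $\varphi_\psi$ that contract a fixed neighborhood of $\infty$ onto $U$ for every $\psi \in D$. For $i$ large enough that $z_i(\varphi)$ lies deep inside $U$, the local dynamics keeps $z_i(\psi)$ inside $U$ uniformly in $\psi$, and refining this trapping forces the sup-norm of $z_i$ on $D$ to tend to zero with $i$. The subtle point is precisely this upgrade from pointwise to uniform convergence across a common disk, and what makes it succeed is the uniformity of the Baldassarri radius, which places all of the sections $z_i$ in a single analytic environment rather than in shrinking neighborhoods.
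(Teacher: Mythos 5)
Your first two steps match the paper: repelling density (B\'ezivin) gives the sequence $z_i \to 0$, and the combination of Lemma~\ref{unr}, the uniform Baldassarri radius of Lemma~\ref{hope}, and Proposition~\ref{nonarch}(2) makes each $z_i$ analytic on one common disk $D$. The gap is in your final step, the upgrade to uniform convergence. Your trapping argument controls the wrong direction: the contracting inverse branches of $\varphi_\psi$ near $\infty$ constrain orbits \emph{in the fiber}, but $z_i(\psi)$ is a repelling periodic point, so its orbit is not trapped by any neighborhood, and more importantly the continuation $\psi \mapsto z_i(\psi)$ is defined by the covering $C'' \to C'$, not by any dynamical shadowing; knowing $z_i(\varphi)$ lies deep inside $U$ at the center says nothing, by itself, about $|z_i(\psi)|$ for other $\psi \in D$. (There is also a secondary problem: in the intended application, Lemma~\ref{persist}, the point $0$ may be a critical point of the central map, so a degree-one analytic inverse branch from a neighborhood of $\infty$ onto a neighborhood of $0$ need not exist.)

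The paper closes this step with a purely ultrametric rigidity argument that you should substitute for the trapping heuristic. Since $\infty$ is fixed and $\varphi_\psi(0)=\infty$ persistently, $0$ is never periodic, so the periodic point $z_i(\psi)$ can never equal $0$: each $z_i$ is a nonvanishing analytic function on the disk $D$. A nonvanishing analytic function on a non-archimedean disk maps it into a disk $D(a_i, r_i)$ with $r_i < |a_i|$, on which the absolute value is constant; hence $\sup_{\psi \in D}|z_i(\psi)| = |z_i(\varphi)| \to 0$, and pointwise convergence at the single center point automatically becomes uniform convergence on $D$. This non-collision plus constant-absolute-value argument is exactly what the uniform Baldassarri radius is set up to enable, and it is the missing idea in your proposal.
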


\begin{proof}By repelling density, there is a sequence of repelling points $z_{i}$ converging to $0$. By Lemma~\ref{hope}, the hypotheses of
Proposition~\ref{nonarch} are satisfied and each repelling point is an analytic map from $D$ to a neighborhood $D'$ in $C''$. Now the map $\theta$ from $C''$ to
$\mathbb{P}^{1}$ defined by $z_{i}$ is rational, and again using Proposition~\ref{nonarch} it is analytic. Now, regardless of whether $0$ is periodic, it cannot
collide with any of the points $z_{i}$ for any map in $D$. Thus, $z_{i}(D) \neq 0$ and so since $z_{i}$ is analytic it maps $D$ into a disk $D(a_{i}, r_{i})$ where
$r_{i} < |a_{i}|$ and since the absolute value is constant on each such disk we have $a_{i} \to 0$ and thus $z_{i} \to 0$ uniformly on $D$.\end{proof}

An infinite labeled sequence of analytic functions converging to $0$ is the best we can hope for. We use the non-collision of periodic points to further prove

\begin{lem}\label{persist}Suppose that a critical point $x$ is preperiodic to a repelling cycle for some point on $C'$. Then it is preperiodic, with the same cycle
and tail lengths, for all points on $C'$.\end{lem}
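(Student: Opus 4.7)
The plan is to work locally near a point $\varphi_0 \in C'$ where $x$ is preperiodic, and then propagate the relation analytically across the irreducible curve $C'$. I would fix such a $\varphi_0$ with $\varphi_0^k(x(\varphi_0)) = w$, for $w$ a period-$n$ repelling point of multiplier $\lambda$, $|\lambda| > 1$. Enhancing $\Gamma$ to mark the cycle of $w$ and invoking Lemma~\ref{hope}, one obtains an analytic disk $D \ni \varphi_0$ in $C'$ on which the cycle points $w_1(\varphi), \ldots, w_n(\varphi)$ lift as single-valued analytic functions, with $w_1(\varphi_0) = w$. Since $x$ is a marked critical point, $x(\varphi)$ is analytic on $D$, and by Proposition~\ref{nonarch} so is $\varphi^k(x(\varphi))$. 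In a chart where $w_1(\varphi_0)$ is finite, set $g(\varphi) := \varphi^k(x(\varphi)) - w_1(\varphi)$; the goal would be to show $g \equiv 0$ on $D$.

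For this I would exploit isospectrality: the multiplier $\lambda$ of $\varphi^n$ at $w_1(\varphi)$ is constant along $C'$, so a non-archimedean Koenigs linearization conjugates $\varphi^n$ to $z \mapsto \lambda z$ on a disk of uniform radius $\rho$ around $w_1(\varphi)$, analytically in $\varphi$. If $g \not\equiv 0$, it has a zero of some finite order $m \geq 1$ at $\varphi_0$, and in linearized coordinates the iterate $\varphi^{k+jn}(x(\varphi))$ corresponds to $\lambda^j$ times the linearized transform of $g$. Consequently the leading Taylor coefficient at $\varphi_0$ of the analytic function $\varphi \mapsto \varphi^{k+jn}(x(\varphi)) - w_1(\varphi)$ would grow in absolute value like $|\lambda|^j$ as $j \to \infty$.

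The main obstacle, and the crux of the argument, is converting this unbounded coefficient growth into an actual contradiction. My strategy would be to bound $|\varphi^{k+jn}(x(\varphi)) - w_1(\varphi)|$ uniformly in $j$ on a fixed subdisk $D' \subset D$: at $\varphi_0$ these iterates all equal $w_1(\varphi_0)$, so by analytic continuity they should remain in a controlled region of $\mathbb{P}^1$ on $D'$, while the non-collision of marked periodic points with each other and with the critical point $x$ keeps the values uniformly away from the chart's pole. A uniform Gauss-norm bound on $D'$ would then be incompatible with leading Taylor coefficients of magnitude $|\lambda|^j |a_m|$, forcing $g$ to vanish identically. The delicate point is that combining Baldassarri-type effective radii with the non-collision statements will require care, since neither alone produces $D'$.

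Once $g \equiv 0$ on $D$, the condition that $\varphi^k(x(\varphi))$ lies in $\{w_1(\varphi), \ldots, w_n(\varphi)\}$ is a closed algebraic condition on $C'$ containing an analytic disk, hence equal to all of $C'$ by irreducibility, after passing if necessary to a finite cover of $C'$ that globally separates the cycle. The minimal tail length $k$ and cycle length $n$ are then preserved throughout: a drop in $n$ would collide distinct cycle points at some $\varphi \in C'$, violating non-collision of marked periodic points, and a drop in $k$ would force $\varphi^j(x(\varphi))$ with $j < k$ into the cycle generically, contradicting the minimality of $k$ at $\varphi_0$ by analytic continuation back to $\varphi_0$.
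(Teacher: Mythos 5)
Your route (Koenigs linearization at the repelling cycle plus growth of Taylor coefficients of $G_j(\varphi)=\varphi^{k+jn}(x(\varphi))-w_1(\varphi)$ at $\varphi_0$) is genuinely different from the paper's, but the step you flag as "delicate" is in fact a real gap, and as stated it does not close. The local computation is fine: on a parameter disk shrinking like $|\lambda|^{-j/m}$ the critical orbit stays inside the linearization domain and the order-$m$ coefficient of $G_j$ at $\varphi_0$ is $\lambda^{j}a_m$. But to get a contradiction you need $\sup_{D'}|G_j|$ bounded \emph{uniformly in $j$} on a \emph{fixed} subdisk $D'$, and neither Baldassarri's effective radii nor non-collision supplies this. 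Once $|\lambda|^{j}$ times the linearized coordinate exceeds the linearization radius, the critical orbit escapes the neighborhood of the cycle and its later position is uncontrolled: on a fixed $D'$ the functions $\varphi\mapsto\varphi^{k+jn}(x(\varphi))$ will in general have sup norm in the chosen chart growing with $j$ (each $G_j$ is bounded if pole-free, but only with a $j$-dependent bound, which is exactly consistent with the coefficient growth, so no contradiction), and poles cannot be excluded either, because non-collision only forbids the critical point \emph{itself} from coinciding with a repelling periodic point (that would force multiplier $0$); it says nothing about the forward orbit $\varphi^{k+jn}(x)$ hitting the chart's pole. A uniform-in-$j$ bound on a fixed disk is essentially the assertion that the critical orbit stays trapped near the cycle, i.e.\ it is more or less equivalent to the persistence you are trying to prove; over $\mathbb{C}$ this is precisely where McMullen needs Montel/holomorphic-motion input, and you have no non-archimedean substitute in place.

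The paper avoids iteration along the critical orbit entirely. After passing to an iterate so that $\varphi(x)$ is fixed, it puts the repelling fixed point at $\infty$ and the relevant preimage at $0$, so $x(\varphi_0)=0$. Since $0$ lies in the Julia set, Corollary~\ref{inflab} (via B\'ezivin and Lemma~\ref{hope}) gives analytic repelling-point sections $z_i$ on a disk $D$ with $z_i\to 0$ uniformly and $|z_i(c)|=|z_i(0)|$. Non-collision makes $z_i(c)-x(c)$ a nowhere-vanishing analytic function on $D$, hence of constant absolute value $|z_i(0)|$, and the ultrametric inequality gives $|x(c)|\le|z_i(0)|\to 0$, so $x\equiv 0$ on $D$ and, since the condition is algebraic and $C'$ is irreducible, on all of $C'$. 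That use of the repelling points accumulating at the preperiodic point, applied to $x$ directly rather than to its escaping orbit, is the idea your proposal is missing; without it (or some normality-type substitute) your central bound cannot be established.
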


\begin{proof}After iteration, we may assume that $\varphi(x)$ is a fixed point. We will show that this remains true under infinitesimal perturbation; since the
property of mapping to a fixed point is algebraic, cut out by the single equation $\varphi^{2}(x) = \varphi(x)$, it holds either for finitely many points on $C'$ or
on all of $C'$.

We assume that $\infty$ is repelling and that $0$ maps to $\infty$ for all $\varphi \in C'$. Because we marked the critical points on $C'$, the function $x$ is
rational on $C'$, and its image misses $\infty$, and so it maps each disk in $C'$ to a disk analytically. Suppose that for some $\varphi_{0} \in C'$, $x = 0$, and
take a neighborhood of $D \ni \varphi_{0}$ that is isomorphic to a $\mathbb{P}^{1}$-disk, such that $x$ becomes an analytic function on $D$. Let us conjugate $D$ to
$D(0, 1)$ and $\varphi_{0}$ to $0 \in D(0, 1)$, so that $x(0) = 0$, and denote the map corresponding to $c \in D(0, 1)$ by $\varphi_{c}$.

Now, by Corollary~\ref{inflab}, there exists an infinite sequence of analytic functions $z_{i}$ on $D$ such that $z_{i}(c)$ is a repelling periodic point for
$\varphi_{c}$ and $z_{i}(c) \to 0$ uniformly. From the proof of Corollary~\ref{inflab}, $|z_{i}(c)| = |z_{i}(0)|$ for all $i$ and $c \in D(0, 1)$.

Moreover, $z_{i}(c) - x(c)$ is an analytic function that is never zero: the multiplier at $z_{i}$ is not zero, which makes it impossible for it to collide with a
critical point. Now $z_{i}(c) - x(c)$ maps over a disk that includes $z_{i}(0)$ and excludes $0$, so that $|z_{i}(c) - x(c)| \leq |z_{i}(0)|$.

Finally, we obtain $$|x(c)| \leq \max\{|z_{i}(0)|, |z_{i}(c)|\} = |z_{i}(0)| \to_{i \to \infty} 0$$ and so, $x(c) = 0$ on $D$, hence on all of
$C'$.\end{proof}

The next step is almost identical to the complex case:

\begin{lem}\label{preper}All critical points of all maps in $C$ are persistently preperiodic, with uniformly bounded cycle and tail lengths.\end{lem}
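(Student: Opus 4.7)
The plan is to reduce Lemma~\ref{preper} to Lemma~\ref{persist}. For each critical point $x$ of $\varphi$, I will exhibit a parameter $t_{0} \in C'$ and an integer $n \geq 0$ such that $\varphi_{t_{0}}^{n}(x(t_{0})) = 0$. Since $0$ is persistently preperiodic to the repelling fixed point at $\infty$, such an equality places the critical point in the hypothesis of Lemma~\ref{persist} at $t_{0}$, whose conclusion then supplies persistent preperiodicity. The tool for locating $t_{0}$ is the classical fact that a non-constant morphism from a smooth complete curve to $\mathbb{P}^{1}$ is surjective.

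More concretely, because the critical points are marked by $\Gamma$, each critical point $x$ extends to a morphism $x \colon \tilde{C} \to \mathbb{P}^{1}$ on the smooth projective completion $\tilde{C}$ of $C'$, and each iterate $y_{n}(t) := \varphi_{t}^{n}(x(t))$ is likewise a morphism $\tilde{C} \to \mathbb{P}^{1}$. If $x$ itself were non-constant, surjectivity would yield $t_{0}$ with $x(t_{0}) = 0$, and Lemma~\ref{persist} would force $x \equiv 0$ on $C'$, a contradiction. Hence $x \equiv p$ is constant. Applying the same dichotomy to each $y_{n}$: if some $y_{n}$ is non-constant, then surjectivity produces $t_{0}$ with $y_{n}(t_{0}) = 0$, so $p$ is preperiodic to $\infty$ at $t_{0}$, and Lemma~\ref{persist} makes $x = p$ persistently preperiodic. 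Otherwise every $y_{n} \equiv p_{n}$ is constant, so each $\varphi_{t}$ agrees on the forward orbit $\{p_{n}\}$. Were $\{p_{n}\}$ infinite, agreement on $2d+1$ or more points would force $\varphi_{t}$ to be independent of $t$, collapsing $C'$ to a single point in $\mathrm{M}_{d}$ under the finite projection $\mathrm{M}_{d}(\Gamma) \to \mathrm{M}_{d}$ and contradicting the positive-dimensionality of $C$. Hence $\{p_{n}\}$ is finite, and $p$ is preperiodic under every $\varphi_{t}$ with the same cycle and tail lengths.

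Uniform bounds on the cycle and tail lengths then follow: tameness gives at most $2d - 2$ critical points, each with a persistent (hence uniform in $t$) cycle and tail length, so the maximum over this finite set is a uniform finite bound.

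The main delicate step is the compactification to $\tilde{C}$: one must verify that any $t_{0}$ produced by surjectivity corresponds to a nondegenerate map $\varphi_{t_{0}}$ of degree $d$, so that Lemma~\ref{persist} actually applies at $t_{0}$. This is handled via the valuative criterion of properness, or equivalently, by arguing place-by-place on the function field of $C'$ using any finite cover on which the family extends across its boundary. Everything else is elementary; the substantive dynamical content is already packaged into Lemma~\ref{persist}, and a global surjectivity argument on a complete curve supplies the hit its hypothesis requires.
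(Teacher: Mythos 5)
There is a genuine gap, and it sits exactly where you flag the ``delicate step.'' Your whole reduction rests on passing to the smooth completion $\tilde{C}$ and using surjectivity of a nonconstant morphism $y_{n}\colon \tilde{C}\to\mathbb{P}^{1}$ to find $t_{0}$ with $y_{n}(t_{0})=0$; but the $t_{0}$ this produces may lie entirely in the boundary $\tilde{C}\setminus C'$, where Lemma~\ref{persist} does not apply, and your proposed fix via the valuative criterion of properness is not available: $\Rat_{d}$ and $\mathrm{M}_{d}$ are not proper, and a family of degree-$d$ maps over a punctured curve does not in general extend to degree $d$ across the punctures, even after a finite cover. Worse, in the situation of the theorem this failure is forced: the hypothesis supplies a multiplier with $|\lambda|_{v}>1$ at the place $v$ used to build $K$, i.e.\ a multiplier with a pole at a boundary point of $\tilde{C}$, so $\varphi$ has bad reduction there and cannot be extended as a degree-$d$ map by any base change. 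On the affine curve $C'$ itself, a nonconstant function need not take the value $0$ at all, so ``$y_{n}$ nonconstant $\Rightarrow$ $y_{n}$ hits $0$ at a usable parameter'' is unjustified, and omitting the single value $0$ carries no finiteness information. (A smaller point: Lemma~\ref{persist} as stated yields persistence of preperiodicity, not $x\equiv 0$ --- that identity occurs only inside its proof --- and in the branch where $x(t_{0})=0$ for some $t_{0}\in C'$ you would simply be done, with no contradiction needed.)

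The substantive input the paper uses, and which your argument omits, is Lemma~\ref{mono}: under the assumption that the critical point is never preperiodic to a repelling cycle, the iterates $\varphi^{i}(x)$ omit \emph{three} repelling points, hence are separable maps from the affine curve $C'$ to $\mathbb{P}^{1}$ minus three points, and by the Riemann--Hurwitz count there are only finitely many nonconstant such maps. This finiteness (which works on the affine curve and needs nothing at the boundary) forces either two iterates to coincide, giving persistent preperiodicity, or almost all iterates to be constant, which pins down $\varphi$ at $2d+1$ points and trivializes $C$. Your scheme of ``hit $0$ somewhere, then invoke Lemma~\ref{persist}'' cannot replace this three-point finiteness, so the proof as written does not go through.
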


\begin{proof}We consider each critical point separately. If it is periodic, then it is persistently so because $C'$ is isospectral, and the period is constant. If it
is preperiodic to a repelling cycle, then the same is also true by Lemma~\ref{persist}. Let us now assume it is not preperiodic to a repelling cycle and derive a
contradiction. There is a rational function $x$ from $C'$ to $\mathbb{P}^{1}$ sending a map to a critical point, and then the functions $\varphi^{i}(x)$ are
rational for all $i$. Those functions all miss the repelling points, so if we fix any three repelling points, they become functions from $C'$ to $\mathbb{P}^{1}$
minus three points. Recall that,

\begin{lem}\label{mono}There are finitely many nonconstant separable maps from an affine algebraic curve to $\mathbb{P}^{1}$ minus three points.\end{lem}

\begin{proof}It suffices to pass to the normalization. Write the algebraic curve as $X$ minus $x_{1}, \ldots, x_{n}$, where $X$ is smooth. We need to show there are finitely many maps from $X$ to $\mathbb{P}^{1}$ such that the
preimage of $\{0, 1, \infty\}$ is contained in $\{x_{1}, \ldots, x_{n}\}$. For each degree $d$ there are finitely many possible maps, determined by the preimages of
$(0, 1, \infty)$ and their multiplicities; this is because knowing the zeros and poles (counted with multiplicity) of a rational function on an algebraic curve is enough to determine the map up to multiplication by a constant. It remains to bound $d$. Now let
$e_{i}$ be the multiplicity with which $x_{i}$ maps. We have $\sum e_{i} \geq 3d$ and $\sum(e_{i} - 1) \leq 2d - 2 + 2g(X)$, which gives us $n \geq d + 2 - 2g(X)$,
valid as long as the maps are separable, even if they are wildly ramified.\end{proof}

Now, the function $x$ is separable (in fact tamely ramified) since $\varphi$ is tamely ramified, forcing the local degree to not be divisible by $p$. Furthermore,
$\varphi^{i}(x)$ is also separable, since $\varphi$ is separable. We can now apply Lemma~\ref{mono}. Since the sequence of functions $\varphi^{i}(x)$ comes from a
finite list, two functions (say $i = m, n$) have to coincide, making $x$ persistently preperiodic after all, and the tail and cycle lengths are again fixed by the
choice of $m$ and $n$. Alternatively, all but finitely many functions have to be constant, and then we know the values of $\varphi$ at infinitely many given points; knowing the values of $\varphi$ at $2d+1$ distinct points is enough to determine $\varphi$, and therefore $C$ is trivial, contradicting the assumption that it is nontrivial.\end{proof}

The final step is to note that tame PCF maps over function fields have multipliers lying in $\overline{\mathbb{F}_{p}}$; this is Corollary $1.7$ of~\cite{Lev3}. Since by assumption this is not the case for $C'$, we have a contradiction, and Theorem~\ref{main} is proved.

\section{Iterative Root-Finding Algorithms}\label{algorithms}

Iterative root-finding algorithms are techniques used to find the roots of a polynomial by using iteration; see Definition~\ref{iterdef}. To prove that they do not exist in most non-archimedean cases, we first amplify Theorem~\ref{main}:

\begin{prop}\label{almostiso}Let $F$ be a global function field of characteristic $p$. Suppose that $C$ is a family of rational functions defined over $F$ all of which are tame. Suppose further that all but finitely many of the multiplier spectra of $C$ are constant, and that those constant multipliers are not all contained in $\overline{\mathbb{F}_{p}}$. Then $C$ is trivial.\end{prop}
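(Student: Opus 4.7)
The plan is to reduce to the isospectral case handled by Theorem~\ref{main} by exploiting the co-finiteness of the set of periods at which the spectrum is constant. Let $N$ be the infinite set of $n$ such that $\Lambda_{n}$ is constant on $C$. On the enhanced moduli space $\mathrm{M}_{d}(\Gamma)$ obtained by marking enough periodic points to kill all automorphisms (as in the setup before Lemma~\ref{unr}), each labeled period-$n$ multiplier is a regular function on $C'$. For $n \in N$, the multiset of period-$n$ multipliers is constant, so each individual labeled multiplier is a regular function on the irreducible curve $C'$ taking values in a finite set, hence is constant.

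Second, I would verify that B\'{e}zivin's density of repelling periodic points can be refined to use only periods in $N$. Since $N$ is co-finite and B\'{e}zivin's theorem gives that repelling periodic points are dense in the Julia set, removing the finitely many excluded periods deletes only finitely many points and preserves density. In particular, after changing coordinates so that $0$ is persistently preperiodic to a repelling point (as in the proof of Theorem~\ref{main}), the point $0$ is still a limit of repelling periodic points whose periods lie in $N$, and each such point has a constant multiplier not equal to $0$ or $1$ across the family.

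Third, I would run the proof of Theorem~\ref{main} verbatim using this restricted set of repelling points. Lemma~\ref{unr} applies because the added repelling periodic point has constant nonzero multiplier and so the non-collision statement still holds; Lemma~\ref{hope} and Corollary~\ref{inflab} then yield an analytic labeled sequence $z_{i}$ on a disk $D \ni \varphi$ converging uniformly to $0$; Lemma~\ref{persist} shows that any critical point that lands on a preperiodic-to-repelling point does so persistently; and Lemma~\ref{preper}, via Lemma~\ref{mono} and tameness, upgrades this to persistent preperiodicity of every critical point with uniformly bounded tail and cycle lengths. Thus every fiber of $C$ is tame and PCF, and Corollary $1.7$ of~\cite{Lev3} forces all multipliers of $C$ to lie in $\overline{\mathbb{F}_{p}}$, contradicting the hypothesis unless $C$ is trivial.

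The main obstacle is the first step: making sure that a constant multiplier spectrum on $C$ really forces each individually labeled multiplier on $C'$ to be constant, rather than merely letting the values permute across the family. Irreducibility of $C'$ and regularity of the labeled multipliers handle this, but one must check that $\Gamma$ can be chosen compatibly with enlarging $\Gamma$ to $\Gamma'$ one additional period-$n$ point at a time (for $n \in N$), which is exactly the iterative enhancement already used in the proof of Lemma~\ref{unr}. Once this is in place, none of the analytic arguments of Section~\ref{outline} distinguish the isospectral hypothesis from the almost-isospectral one, and the conclusion follows by the same contradiction.
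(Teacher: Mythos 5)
Your proposal is correct and follows essentially the same route as the paper: restrict to the cofinitely many periods with constant spectra, note that this excludes only finitely many repelling points so Bézivin density still gives a constant-multiplier subsequence accumulating at the marked preperiodic point, and then rerun Lemmas~\ref{hope}, \ref{persist}, and \ref{preper} verbatim to conclude via Corollary 1.7 of~\cite{Lev3}. Your added observation that irreducibility forces each labeled multiplier (not just the multiset) to be constant is a harmless elaboration of what the paper leaves implicit.
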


\begin{proof}The proof in Section~\ref{outline} works verbatim. We pick a completion $K$ such that one of those constant-multiplier cycles is repelling, and iterate to obtain a repelling fixed point, which we fix coordinates to put at $\infty$. We also make $0$ one of its preimages. We again apply the result in~\cite{Bez} to show that there's an infinite sequence of repelling points accumulating at $0$; all but finitely many of these points have constant multipliers in $C$, and therefore we are guaranteed to have a subsequence of repelling points, with constant multipliers, accumulating at $0$. From this point we immediately obtain Lemma~\ref{hope}. We also obtain a modification of Lemma~\ref{persist}, excluding the finitely many repelling points with nonconstant multipliers; this is enough to obtain three repelling points that critical points must miss if they are not to be persistently preperiodic, which is enough to prove Lemma~\ref{preper}.\end{proof}

\begin{rem}In McMullen's original formulation, the families $C \subseteq \mathrm{M}_{d}$ that were proven to be constant or Latt\`{e}s were slightly broader than isospectral ones: McMullen studied so-called stable families, that is, families for which the period of the longest attracting cycle is bounded. Of course, for cycle lengths beyond this uniform bound, the multipliers miss infinitely many values (all attracting ones), and therefore are constant in any algebraic family. Thus, McMullen's stable families meet the condition in Proposition~\ref{almostiso} that all but finitely many multiplier spectra be constant. They trivially meet the other conditions: all functions over $\mathbb{C}$ are tame, and the condition that the multipliers not all be contained in $\overline{\mathbb{F}_{p}}$ really exists only to obtain a repelling cycle in some completion $K_{v}$ of $F$, whereas by~\cite{Shi}, all but at most $2d-2$ cycles of any map in $\mathrm{M}_{d}(\mathbb{C})$ are repelling.\end{rem}

We now can prove Theorem~\ref{iterative}.

\begin{proof}[Proof of Theorem~\ref{iterative}]If $p \geq r$ or $p = 0$, then the proof is not a corollary of the results in the remainder of this paper. However, if $p > d$ then it is a corollary of Proposition~\ref{almostiso}; we prove the theorem under either condition.

If $T^{n}_{f}(z)$ converges to a root of $f$ on an open dense subset of $\mathbb{P}^{1}$, then it is impossible for $T_{f}$ to have non-repelling cycles except those corresponding to the roots of $f$. To see why, observe that attracting cycles have open basins of attraction, and over a non-archimedean field indifferent cycles have open basins on which points are recurrent; see section $3$ of~\cite{RL}. Attracting fixed points that are not roots of $f$ will then have an open set converging to them, and indifferent points and attracting or indifferent cycles of period more than $1$ have basins on which points either are already periodic or have orbits that do not converge to anything.

\begin{rem}Periodic points may occur near an indifferent fixed point $z$ if its multiplier is a root of unity, and we work over a field $K$ of positive characteristic. If $\chr K = 0$ or the multiplier at $z$ is not a root of unity, then it is proven in~\cite{RL} that we can find an open set containing $z$ without periodic points other than $z$; see also~\cite{LRL}. It is conjectured that we can also find such an open set if $\chr K > 0$ and the multiplier at $z$ is a root of unity. However, even if this conjecture is false, it is certainly true that near an indifferent fixed point $z$, points have orbits that do not converge to another attracting fixed point.\end{rem}

We will deal with the $p \geq r$ and $p > d$ cases separately. First, let us assume that $p \geq r$ or $p = 0$. When the fixed points are all distinct, that is, when none of their multipliers is $1$, the following formula holds for their multipliers $\lambda_{i}$: $$\sum_{i = 1}^{d+1}\frac{1}{1 - \lambda_{i}} = 1$$ Let us now look at the formula modulo the maximal ideal $\mathfrak{m}$ of the ring of integers of the field $K$. Whenever $\lambda_{i}$ corresponds to an attracting point, we have $1/(1-\lambda_{i}) \equiv 1 \mod \mathfrak{m}$, and whenever it corresponds to a repelling point, $|1-\lambda_{i}| > 1$ and so $1/(1-\lambda_{i}) \equiv 0 \mod \mathfrak{m}$. Since we have $r$ attracting points, we have $r \equiv 1 \mod \mathfrak{m}$. This implies $p | r-1$, which contradicts the assumption that $p \geq r$ or $p = 0$.

Let us now assume that $p < r$, but still $p > d$. Observe that this implies that $r \geq 4$. The image $T(\Poly_{r}) \subseteq \Rat_{d}$ is an algebraic family, and $\Lambda_{n}$ applied to $T(\Poly_{r})$ is also algebraic, so if it misses infinitely many values of each multiplier (namely, the attracting ones), it must be constant. This shows that the multiplier spectra on $T(\Poly_{r})$ are constant except possibly for the finitely many corresponding to fixed points. Moreover, the attracting cycles of period more than $1$ must be repelling, so clearly the multipliers are not in $\overline{\mathbb{F}_{p}}$.

We apply Proposition~\ref{almostiso} and obtain that $T(\Poly_{r})$ maps to a single point of $\mathrm{M}_{d}$, which we call $\varphi$. An open dense set of $\mathbb{P}^{1}$ converges to a finite set of points, called the sinks of $\varphi$, which must be contained in the set of roots of every $f \in \Poly_{r}$. Moreover, the set of sinks of $\varphi$ must coincide with the set of roots of $f$, because the coefficients of $T_{f}$ depend algebraically on the coefficients of $f$, which are symmetric functions of the roots of $f$. This means that the existence of a generally convergent algorithm implies that an open dense set of $f \in \Poly_{r}$ have identical configurations of roots in $\mathbb{P}^{1}$. Since $r \geq 4$, this contradicts the existence of the cross-ratio.\end{proof}

The portion of the proof assuming $p \geq r$ is unique to the non-archimedean setting, while the portion assuming $p > d$ generalizes McMullen's work. As a result, McMullen only shows there do not exist generally convergent algorithms if $r \geq 4$. Over the complex numbers, there exist generally convergent algorithms for polynomials of degrees $2$ and $3$. In degree $2$, Newton's method is such an algorithm. Over a non-archimedean field, Newton's method, equivalent to the rational function $\varphi(z) = z^{2}$, is no longer generally convergent. Over the complex numbers, the open dense set lying away from the unit circle has orbit converging to $0$ and $\infty$, but in any non-archimedean field, the equivalent of the unit circle is the set of finite nonzero residue classes, which has nonempty interior.

\bibliographystyle{amsplain}
\bibliography{rigidity}

\providecommand{\bysame}{\leavevmode\hbox to3em{\hrulefill}\thinspace}
\providecommand{\MR}{\relax\ifhmode\unskip\space\fi MR }
\providecommand{\MRhref}[2]{%
  \href{http://www.ams.org/mathscinet-getitem?mr=#1}{#2}
}
\providecommand{\href}[2]{#2}
\begin{thebibliography}{10}

\bibitem{Bal}
Francesco Baldassarri, \emph{Radius of convergence of $p$-adic connections and
  the {B}erkovich ramification locus}, arXiv:1209.0081, Dec 2012.

\bibitem{Lev3}
Robert Benedetto, Patrick Ingram, Rafe Jones, and Alon Levy, \emph{Attracting
  cycles in $p$-adic dynamics and height bounds for postcritically finite
  maps}, Duke Math. J. \textbf{163} (2014), no.~13, 2325--2356.

\bibitem{Bez}
Jean-Paul B\'{e}zivin, \emph{Sur les points p\'{e}riodiques des applications
  rationnelles en dynamique ultram\'{e}trique}, Acta Arith. \textbf{100}
  (2001), no.~1, 63--74.

\bibitem{BBLPP}
Eva Brezin, Rosemary Byrne, Joshua Levy, Kevin Pilgrim, and Kelly Plummer,
  \emph{A census of rational maps}, Conformal Geometry and Dynamics \textbf{4}
  (2000), 35--74.

\bibitem{DH}
Adrien Douady and John~H. Hubbard, \emph{A proof of {T}hurston's topological
  characterization of rational functions}, Acta Math. \textbf{171} (1993),
  263--297.

\bibitem{Eps3}
Adam Epstein, \emph{Transversality in holomorphic dynamics},
  http://homepages.warwick.ac.uk/~mases/Transversality.pdf.

\bibitem{FG}
Charles Favre and Thomas Gauthier, \emph{Distribution of postcritically finite
  polynomials}, arXiv:1302.0810, 2013.

\bibitem{HT}
Benjamin Hutz and Michael Tepper, \emph{Multiplier spectra and the moduli space
  of degree 3 morphisms on $\mathbb{P}^{1}$}, arXiv:1110.5082, Oct 2011.

\bibitem{Lev4}
Alon Levy, \emph{An algebraic proof of {T}hurston's rigidity for maps with a
  superattracting cycle}, arXiv:1201.1969v2, Nov 2012.

\bibitem{LRL}
Karl-Olof Lindahl and Juan Rivera-{L}etelier, \emph{Optimal cycles in
  ultrametric dynamics and wildly ramified power series}, arXiv:1311.4478, Nov
  2013.

\bibitem{McM}
Curtis~T. McMullen, \emph{Families of rational maps and iterative root-finding
  algorithms}, Ann. of Math. (2) \textbf{125} (1987), no.~3, 467--493.
  \MR{MR890160 (88i:58082)}

\bibitem{RL}
Juan Rivera-Letelier, \emph{Dynamique des fonctions rationnelles sur des corps
  locaux}, Ast\'{e}risque \textbf{287} (2003), 147--230.

\bibitem{Shi}
Mitsuhiro Shishikura, \emph{On the quasiconformal surgery of rational
  functions}, Ann.\ Sci.\ \'{E}cole Norm.\ Sup. \textbf{20} (1987), 1--29.

\bibitem{Sil96}
Joseph~H. Silverman, \emph{The space of rational maps on {$\bold P\sp 1$}},
  Duke Math. J. \textbf{94} (1998), no.~1, 41--77. \MR{MR1635900 (2000m:14010)}

\bibitem{Sma}
Steve Smale, \emph{On the efficiency of algorithms of analysis}, Bull. Amer.
  Math. Soc. (N.S.) \textbf{13} (1985), no.~2, 87--121.

\end{thebibliography}

\bigskip\noindent \sc{Alon Levy -- Department of Mathematics, KTH, Stockholm, Sweden}

\noindent \tt{email: alonlevy@kth.se}

\end{document}